\PassOptionsToPackage{table}{xcolor}
\documentclass[review,hidelinks,onefignum,onetabnum]{siamart220329}

\usepackage{etoolbox}
\makeatletter
\let\cref@override@label@type\@undefined
\def\cref@override@label@type#1\@nil{}
\makeatother

\usepackage{lipsum}
\usepackage{amsfonts}
\usepackage{amssymb}
\usepackage{amsmath}
\usepackage{bm}
\usepackage{graphicx}
\usepackage{epstopdf}
\usepackage{algorithmic}

\usepackage{subfigure}
\usepackage{multirow}
\usepackage{color}
\usepackage{graphicx}
\usepackage{subcaption}

\usepackage{booktabs}
\usepackage{multirow}
\usepackage{diagbox}
\usepackage{arydshln} 

\usepackage{makecell} 

\usepackage{lineno}
\nolinenumbers

\usepackage{tikz}
\usetikzlibrary{calc, matrix, positioning,patterns}

\newsiamremark{remark}{Remark}
\newsiamremark{example}{Example}

\title{High-order complete flux schemes for convection-diffusion equations on arbitrary subdivisions}

\author{Wenyu Lei\footnotemark[1]\thanks{School of Mathematical Sciences, University of Electronic Science and Technology of China, Chengdu, Sichuan 611731, China 	(\email{wenyu.lei@uestc.edu.cn}, \email{xul@uestc.edu.cn}, \email{pyang@uestc.edu.cn})  }
	\and Liwei Xu\footnotemark[1]
	\and Peng Yang\footnotemark[1]
	 } 

\headers{High-order complete flux schemes on arbitrary subdivisions}{W. Lei,  L. Xu and P. Yang}

\begin{document}
	
	\maketitle
	
	\begin{abstract}
		We develop a novel complete flux finite volume method for convection-diffusion equations  on arbitrary subdivisions in two and three dimensions. Unlike standard finite volume discretizations, where the numerical flux is directly approximated from the flux definition, we derive the exact normal flux across each control volume edge/face from the underlying PDE. This exact flux splits naturally into a homogeneous part (the classical Scharfetter--Gummel flux) and an inhomogeneous part based on a Green's function that incorporates the tangential flux and the source term. The resulting formulation is exactly equivalent to the continuous equation and, once the discrete space is chosen, yields high-order schemes without using correction or stabilization strategies.   From this framework, we develop concrete numerical schemes on arbitrary grids
		using  Lagrange finite element spaces and B-spline spaces, together with their
		companion dual meshes (control volume partitions).
		Numerical experiments in two and three dimensions confirm the  optimal convergence and positivity preservation of the proposed schemes.
	
%
	\end{abstract}
	
	\begin{keywords}
		complete flux,   finite volume,  convection-diffusion,  high-order,  B-spline 
	\end{keywords}
	
	\begin{MSCcodes}
		65N08,  65D07,  65L11
	\end{MSCcodes}
	
	\section{Introduction}
	Steady-state convection-diffusion equations arise in numerous physical and engineering applications, including semiconductor device simulation~\cite{Selberherr1984}, groundwater contaminant transport~\cite{ZhengBennett2002}, heat and mass transfer in fluid flows~\cite{Patankar1980}, and atmospheric pollutant dispersion~\cite{SeinfeldPandis2016}. 
	When convection dominates over diffusion, the problem becomes singularly perturbed and its numerical approximation is notoriously challenging: standard Galerkin methods produce spurious oscillations unless the mesh is excessively refined~\cite{BrooksHughes1982}, while simple upwinding introduces excessive numerical diffusion~\cite{Patankar1980}.
	
	Finite volume methods are attractive for these problems due to their local conservation property and their natural ability to handle discontinuous coefficients and complex geometries~\cite{Aavatsmark2002,Eymard2000,Hermeline2000,LeVeque2002,LiR2000}. 
	A particularly influential class of methods for convection-dominated problems originates from the work of Scharfetter and Gummel~\cite{Scharfetter1969} in semiconductor physics. 
	The Scharfetter--Gummel (SG) scheme solves a local one-dimensional boundary value problem on the interval between two adjacent grid points, assuming constant coefficients and neglecting the source term. The resulting numerical flux, expressed in terms of the Bernoulli function, resolves sharp layers without spurious oscillations even on relatively coarse meshes.  Building on the same exponential fitting technique, related methods have been developed in various numerical frameworks, including finite element and finite volume discretizations~\cite{RoosStynesTobiska2008}, and recently discontinuous Galerkin methods~\cite{LeiW2024}.
	A more systematic generalization of this idea  is the complete flux (CF) scheme  proposed by Thiart~\cite{Thiart1990}  and  further investigated by Ten Thije Boonkkamp and co-workers~\cite{Liu2013,ten2005complete,ten2011}, who derived the CF formulation by retaining the source term contribution in the Scharfetter--Gummel flux.  In multiple spatial dimensions, the tangential flux divergence naturally enters the local ODE as part of the effective source term, leading to a generalized representation of the numerical flux~\cite{Cheng2021,ten2011}. 
	A uniform second-order convergent CF scheme on unstructured one-dimensional grids was established in~\cite{Farrell2017}, which also outlines an extension to unstructured triangular meshes; numerical experiments indicate that second-order accuracy on such meshes is attained  on locally orthogonal grids with one direction aligned along the advection vector.
	
	Despite these developments, the extension of the complete flux methodology to high-order finite volume schemes on arbitrary multi-dimensional grids remains relatively unexplored.
	On the other hand, higher-order finite volume discretizations in multiple dimensions have traditionally been pursued within several related frameworks, including the finite volume element method (FVEM)~\cite{Cai1991,ChenZ2012}, the control volume finite element method (CV-FEM)~\cite{Forsyth1991}, the box method~\cite{BankRose1987}, the vertex-centered finite volume schemes~\cite{ZhangZ2015}, etc. 
	For linear and bi-linear spaces, these approaches are well understood and second-order accuracy is obtained under mild mesh conditions~\cite{XuZou2009}. 
	For quadratic and higher even-degree polynomial spaces, however, a well-documented difficulty arises: the standard FVEM (or CV-FEM) does not achieve optimal $L^2$ convergence on arbitrary dual meshes~\cite{lv2012optimal,Wang2016}. 
	On quadrilateral meshes, optimal accuracy can be recovered only when the control volumes are constructed using the Gauss--Legendre points~\cite{LinY2015,ZhangZ2015}. 
	A similar issue has been observed in the context of isogeometric analysis with B-spline spaces~\cite{KamberG2022,KamberG2020}. For B-spline spaces, employing the Greville abscissae as control volume centers is currently the only systematic way to construct a dual partition. However, numerical evidence indicates that this choice yields suboptimal convergence rates for quadratic and higher even-degree splines, and no strategy analogous to the Gauss--Legendre points for Lagrange finite elements is currently known for B-spline spaces. The underlying mechanism for this suboptimal behavior is still not fully understood. The question of achieving optimal high-order accuracy through suitably positioned control volume boundaries remains open. 
	
	In this paper, we construct a family of complete flux finite volume schemes based on an exact representation of the normal flux derived from the underlying PDE. 
	The starting point is the observation that, for a given control volume edge, the convection-diffusion equation reduces to a one-dimensional ODE along the normal direction, parametrized by the tangential coordinate. Solving this ODE with a Green's function yields an exact expression for the normal flux that decomposes into a homogeneous part (the classical SG flux) and an inhomogeneous part involving integrals of the source term and the tangential flux divergence. By inserting this flux representation into the control volume balance,  we obtain an exact flux relation on each control volume. 
	Discrete schemes are then generated by restricting this exact relation to a chosen finite element space and evaluating the resulting integrals either analytically or by quadrature.
	
	Within this framework, we develop the following concrete schemes  using  Lagrange finite element spaces and B-spline spaces, together with their
	companion dual meshes (control volume partitions):
	\begin{itemize}
		\item A second-order scheme on general triangular (or rectangular) meshes using piecewise linear (or bi-linear) finite element  spaces,  with standard barycentric dual partitions. 
		\item A third-order scheme on rectangular meshes using the bi-quadratic $C^0$ Lagrange space, with a family of dual partitions parametrized by a scalar $p^*$.
		\item A third-order scheme on rectangular meshes using the bi-quadratic $C^1$ B-spline space, with control volumes centered at the Greville abscissae.
		\item An extension to three-dimensional problems on  hexahedral or tetrahedral meshes.
	\end{itemize}
  The proposed formulation offers two noticeable advantages. 
   First, the upwinding strength in the numerical flux is governed by two parameters that play the role of designated‌ normal distances from the edge to the adjacent control volume centers, yet they are not required to equal the actual geometric distances. They can be chosen freely, and their difference  is not fixed by the mesh size (it may be a constant multiple thereof, allowing some freedom of choice; see the discussion in Remark~\ref{remark,1} and the numerical experiments in Subsection~\ref{subsec:2D_convergence}).
	Second, for quadratic spaces, the complete flux scheme achieves optimal  convergence independently of the placement of control volume boundaries, in contrast to the standard FVEM (or CV-FEM) which requires a carefully designed dual mesh. On rectangular meshes with bi-quadratic $C^0$ Lagrange elements, the scheme attains optimal accuracy for any admissible dual mesh; for bi-quadratic $C^1$ B-spline spaces, optimal convergence is obtained at the Greville abscissae without additional tuning.
	
	The remainder of the paper is organized as follows. 
	Section~\ref{sec:2} introduces the model problem, the control volume construction, and the local coordinate system. 
	Section~\ref{sec,3} derives the exact flux relations, including the Green's function representation and the control volume balance. 
	Section~\ref{sec:4} presents the construction of concrete complete flux schemes:  second-order schemes on general triangular and rectangular meshes, third-order schemes on rectangular meshes using both $C^0$ Lagrange and $C^1$ B-spline spaces, and the extension to three dimensions. 
	Numerical experiments are reported in Section~\ref{sec:numerical}, and conclusions are given in Section~\ref{sec:conclusion}.

	\section{Preliminaries}\label{sec:2}
	\subsection{Model problem}
	
	We consider the steady-state convection-diffusion equation  on a bounded domain $\Omega \subset \mathbb{R}^d$ $(d=2,3)$:
	\begin{equation}
		-\nabla \cdot( \alpha \nabla  u - \bm{\beta}(\bm{x}) u )= f \quad \text{in } \Omega,
		\label{eq:model}
	\end{equation}
	where $\alpha>0$ is the constant diffusion coefficient, $\bm{\beta}(\bm{x})=(\beta_1(\bm{x}),\ldots,\beta_d(\bm{x}))^{T}$ is the convection velocity, and $f(\bm{x})$ is a given source term. Equation \eqref{eq:model} is supplemented by appropriate boundary conditions on $\partial \Omega$, which may be of Dirichlet, Neumann, or Robin type. 	
	In this work, we are particularly interested in the convection-dominated regime, i.e.\ the case where the diffusion constant $\alpha$ may be very small compared to the velocity $|\bm{\beta}|$. 
	
	In what follows, we focus on the two-dimensional case ($d=2$) in our description and analysis, with the understanding that the extension to the three-dimensional case ($d=3$) is conceptually straightforward, albeit notationally more involved. Although the two cases are largely similar, we point out that there are some minor differences, and we shall discuss the three-dimensional case separately in Subsection~\ref{subsec:extension_3d}.
	
	\subsection{Control volumes in two dimensions}\label{subs,1.2}
	Assume that  the domain  $\Omega\subset \mathbb{R}^2$ is a polygonal domain. Let  $\Omega$ be partitioned into a set of non‑overlapping polygonal elements, e.g., triangles, quadrilaterals, or general polygons.  Denote the partition by $\mathcal{T}_h$, where  $h$ is the maximum diameter of all elements. 
	
	\begin{figure}[h!]
		\centering
		\subfigure[Rectangular mesh]{
			\begin{minipage}[t]{.46\textwidth}
				\centering
				\includegraphics[width=140pt]{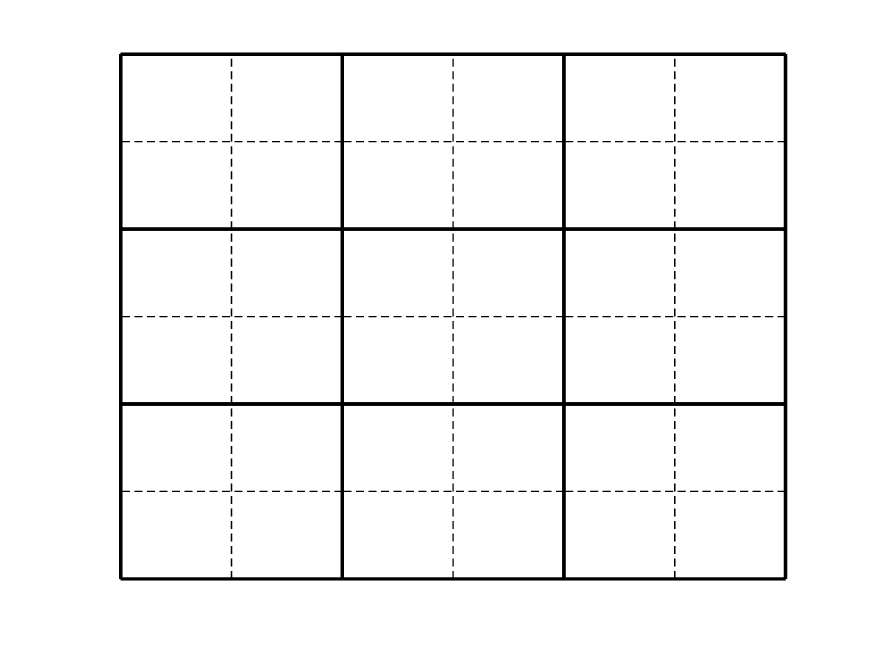}
			\end{minipage}
		}
		\subfigure[Triangular mesh]{
			\begin{minipage}[t]{.46\textwidth}
				\centering
				\includegraphics[width=140pt]{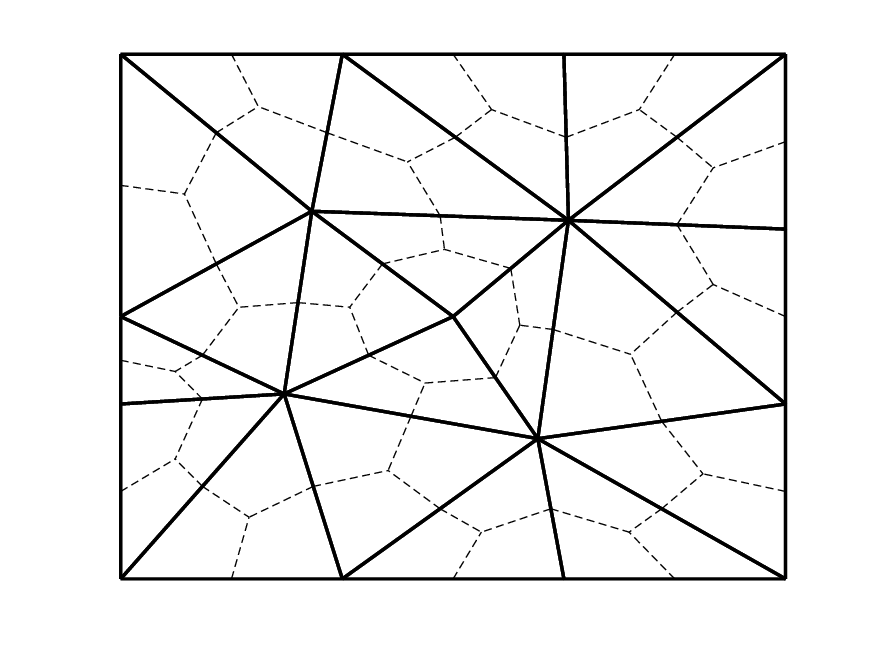}
				\label{fig,0,1}
			\end{minipage}
		}
		\caption{Dual partitions (control volume grids, dashed lines) for rectangular and triangular  primary meshes of a square domain $\Omega$.}
		\label{fig,0}
	\end{figure}
	For the control volume grid, we consider a different polygonal partition of the domain $\Omega$, which is constructed based on the primary mesh $\mathcal{T}_h$ and is therefore referred to as the dual partition of $\mathcal{T}_h$. Suppose that each dual element (control volume) $V_P$ of the dual partition is associated with a point $P$ (often taken as the centroid of $V_P$); the dual partition is then denoted by %
	\begin{eqnarray*}
		\mathcal{T}^*_h= \{V_P\}_{P\in \mathcal{V}}, 
	\end{eqnarray*} 
	where $\mathcal{V}$ is the set of such points. In a vertex-centered finite volume method, the control volume centers coincide with the computational nodes of the primary mesh. Figure~\ref{fig,0} illustrates the dual partitions (control volume grids) for triangular and rectangular primary meshes of a square domain $\Omega$: the solid lines indicate  the primary mesh edges which describe $\mathcal{T}_h$, while the dashed lines represent control volume boundaries which describe $\mathcal{T}^*_h$. The dual mesh shown for the triangular case is constructed using the barycenters and edge midpoints of the triangles.
	
	Additional dual grid constructions introduced in connection with the higher-order solution spaces will be  discussed later,  see Subsection~\ref{subsec:higher_order_rect}. In those cases, the control volumes will occupy more complex spatial positions. For instance, $P\in \mathcal{V}$ may not be a vertex of the primary mesh, or more precisely, the control volume may no longer be centered around a primary mesh vertex. In all cases, the number of control volumes matches the dimension of the discrete solution space, i.e.\ the number of unknowns.

	\subsection{Local coordinate system in two dimensions}\label{subs,1.3}
	\begin{figure}[h!]
		\centering
		\begin{minipage}[t]{.9\textwidth}
			\centering
			\includegraphics[width=180pt]{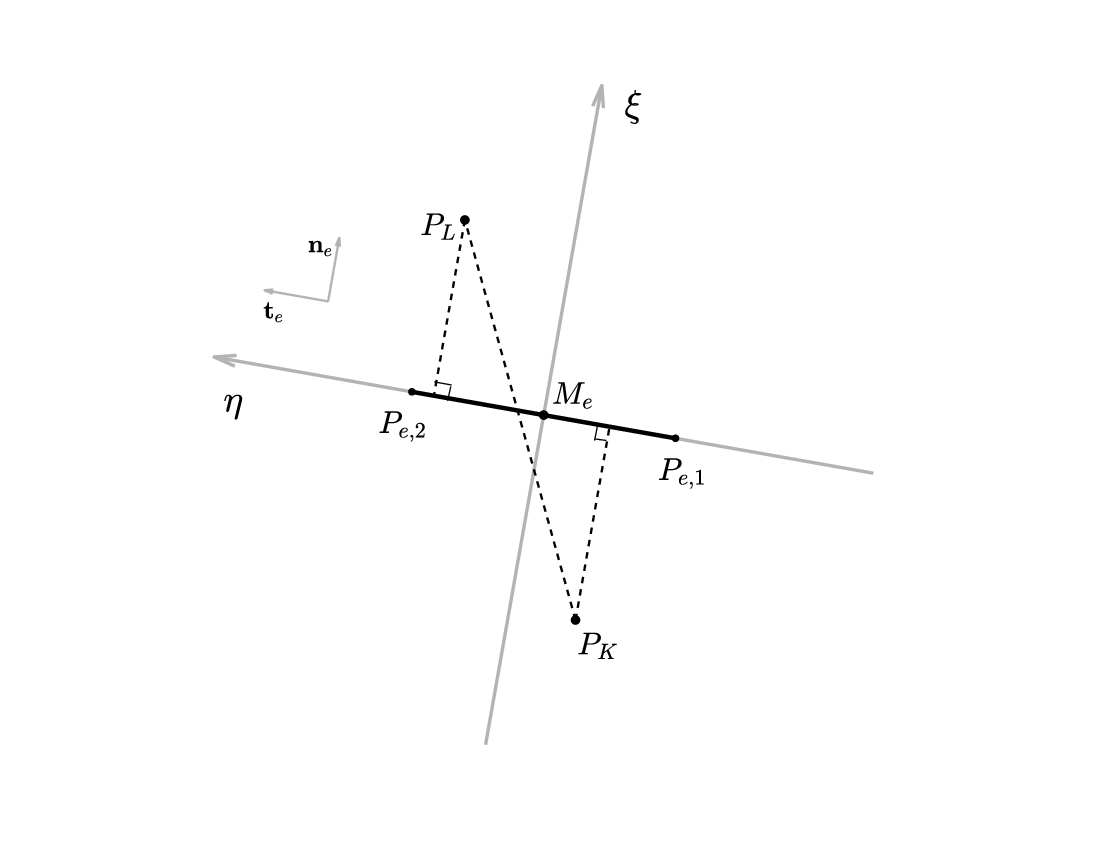}
		\end{minipage}
		\caption{Local coordinate system on a control volume edge.}
		\label{fig,1}
	\end{figure}
	Consider a control volume edge $e$, which is a finite line segment with endpoints $P_{e,1}$ and $P_{e,2}$, e.g., see Figure~\ref{fig,0}, the dashed line segment connecting the center of a primary element to an edge midpoint. We shall derive the exact flux across this edge.
	To this end, we assign an orientation (from $P_{e,1}$ to $P_{e,2}$), and we denote by $\bm{t}_e$ its unit tangential vector and by $\bm{n}_e$ its unit normal vector, chosen such that $(\bm{n}_e,\bm{t}_e)$ forms a right‑handed orthonormal system. Let $M_e$ be the midpoint of $e$. On the two sides of $e$ lie two points $P_K$ and $P_L$, which are the centers of the adjacent control volumes; in the barycentric dual mesh of Figure~\ref{fig,0,1}, they are two vertices of the primary triangle. We introduce a local Cartesian coordinate system $(\xi,\eta)$ as follows (see Figure~\ref{fig,1}):
	\begin{itemize}
		\item The origin is placed at the midpoint $M_e$.
		\item The $\eta$‑axis is aligned with the tangential direction $\bm{t}_e$, i.e.\ $\eta$ increases in the direction of $\bm{t}_e$.
		\item The $\xi$‑axis is aligned with the normal direction $\bm{n}_e$, i.e.\ $\xi$ increases in the direction of $\bm{n}_e$.
	\end{itemize}
	In this system, the edge $e$ lies on the line $\xi=0$, with $\eta$ ranging from the $\eta$‑coordinates of its endpoints. The points $P_K$ and $P_L$ have local coordinates $(\xi_K,\eta_K)$ and $(\xi_L,\eta_L)$, respectively. Without loss of generality, we assume $P_K$ on the negative side ($\xi_K\le0$) and $P_L$ on the positive side ($\xi_L\ge0$). By construction,  $\xi_K\le0\le\xi_L$ with $\xi_L-\xi_K>0$.  Note that the line segment $\overline{P_KP_L}$ is not required to be perpendicular to $e$.
	\section{Exact flux relations in two dimensions}\label{sec,3}
	
	Using local coordinates, we derive exact expressions for the flux across a control volume edge and for its contribution to the control volume balance. We adopt the local coordinate system $(\xi,\eta)$ and the associated notation introduced in Subsection~\ref{subs,1.3}. The derivation proceeds in two steps: first, we analyze the local one‑dimensional problem along the normal direction on a single edge, obtaining an exact representation of the normal flux in terms of the traces of the solution and of the source term. Second, we insert this flux representation into the integral balance over  control volumes. The resulting exact flux relation is the foundation for our numerical schemes.
	\subsection{Local equation near the edge $e$}
	
	Define the flux 
	\begin{equation}\label{flux,j}
		\bm{j} = \alpha\nabla u - \bm{\beta}u.
	\end{equation}
	Its normal and tangential components with respect to $e$ (see Figure~\ref{fig,1}) are
	\begin{eqnarray*}
		j_{\bm{n}}:=\bm{j}\cdot \bm{n}_e = \alpha \frac{\partial u}{\partial\xi}-\beta_{\bm{n}}u,\qquad j_{\bm{t}}:=\bm{j}\cdot \bm{t}_e =\alpha \frac{\partial u}{\partial\eta}-\beta_{\bm{t}}u,
	\end{eqnarray*}
	where $\beta_{\bm{n}} = \bm{\beta}\cdot\bm{n}_e$ and $\beta_{\bm{t}} = \bm{\beta}\cdot\bm{t}_e$.  So \eqref{eq:model} can be rewritten in the local coordinates as
	\begin{equation}\label{eq:local eq}
		-\nabla\cdot\bm{j}=-\frac{\partial j_{\bm{n}} }{\partial\xi} - \frac{\partial j_{\bm{t}} }{\partial\eta} = f.
	\end{equation}
	Substituting the expression for $j_{\bm{n}}$ yields
	\begin{equation}\label{eq:local}
		-\alpha \frac{\partial^2 u}{\partial\xi^2} + \frac{\partial  \big(\beta_{\bm{n},\eta}(\xi) u\big)}{\partial \xi}= f + \frac{\partial j_{\bm{t}}}{\partial\eta} =: S_\eta(\xi). 
	\end{equation}
	Here, $ \beta_{\bm{n},\eta}(\xi)$ simply denotes $\beta_{\bm{n}}(\xi,\eta)$.
	For a fixed $\eta$, \eqref{eq:local} is an ordinary differential equation in $\xi$ on the interval $(\xi_K,\xi_L)$ with Dirichlet boundary conditions
	\begin{eqnarray}\label{eq,boundary condition}
		u(\xi_K,\eta) = u_{K,\eta}, \qquad u(\xi_L,\eta) = u_{L,\eta},
	\end{eqnarray}
	where $u_{K,\eta}$ and $u_{L,\eta}$ denote the traces of $u$ along the lines $\xi=\xi_K$ and $\xi=\xi_L$, respectively. The solution of \eqref{eq:local} can be split into a homogeneous part $u^h$ and an inhomogeneous part $u^i$, i.e.\ 
	\begin{eqnarray}\label{eq:u=uh+ui}
		u = u^h + u^i. 
	\end{eqnarray}
	To this end, denote
	\begin{eqnarray}\label{eq,w}
		w(\xi) = e^{-\int_{\xi_K}^{\xi} \frac{\beta_{\bm{n},\eta}(t)}{\alpha}\; \mathrm{d}t}.
	\end{eqnarray}
	Then, the homogeneous equation 
	\begin{equation*}
		-\alpha \frac{\partial^2 u^h}{\partial\xi^2} + \frac{\partial\big(\beta_{\bm{n},\eta}(\xi) u^h\big)}{\partial \xi}
		= -\frac{\partial}{\partial\xi}\Big(\alpha \frac{\partial u^h}{\partial \xi} - \beta_{\bm{n},\eta}(\xi) u^h\Big) = 0,
	\end{equation*}
	implies the homogeneous normal flux
	\begin{equation}\label{eq: homogeneous normal flux}
	\alpha \frac{\partial u^h}{\partial \xi} - \beta_{\bm{n},\eta}(\xi) u^h = \alpha w^{-1}(\xi)\frac{\partial \big(w(\xi)u^h\big)}{\partial \xi},
  	\end{equation}
  	is constant along $\xi$.
We obtain the general solution
	\begin{eqnarray}\label{eq:homogeneous equation}
		u^h = \frac{C_1}{w(\xi)}  +\frac{ C_2}{w(\xi)}\int_{\xi_K}^{\xi} w(t)\;\mathrm{d}t.
	\end{eqnarray}
Applying the boundary condition \eqref{eq,boundary condition} to solve $C_1$ and $C_2$ gives 
	\begin{eqnarray}\label{eq:homogeneous equation,1}
		u^h = \frac{u_{K,\eta}}{w(\xi)} + \big(u_{L,\eta}w(\xi_L)-u_{K,\eta}\big) \frac{\int_{\xi_K}^{\xi} w(t)\,\mathrm{d}t}{w(\xi)\int_{\xi_K}^{\xi_L} w(t)\,\mathrm{d}t}.
	\end{eqnarray}
	 On the other hand, the inhomogeneous solution of \eqref{eq:local}  is given by
	\begin{equation} \label{eq:inhomogeneous equation}
		u^i = \int_{\xi_K}^{\xi_L} G(\xi,s) S_\eta(s)\; \mathrm{d}s,
	\end{equation}
	where $G(\xi,s)$  is the Green's function  satisfying
	\begin{equation*}
	-\alpha \frac{\partial^2 G(\xi,s)}{\partial\xi^2} + \frac{\partial\big(\beta_{\bm{n},\eta}(\xi) G(\xi,s)\big)}{\partial \xi} =\delta(\xi-s),
	\end{equation*}
	and
	\begin{equation*}
		G(\xi_K,s) = 0,\qquad G(\xi_L,s) = 0.
	\end{equation*}
	The closed formula of $G$ can be derived from two independent solutions \eqref{eq:homogeneous equation} of the homogeneous equation that satisfy the left and right boundary conditions respectively:
	\begin{equation}\label{eq,y}
	y_1(\xi) = \frac{1}{w(\xi)}\int_{\xi_K}^{\xi} w(t)\,\mathrm{d}t,\qquad
	y_2(\xi) = \frac{1}{w(\xi)}\int_{\xi}^{\xi_L} w(t)\,\mathrm{d}t.
	\end{equation}
	Their Wronskian is
	\begin{eqnarray}\label{eq,W}
		W(\xi) = y_1(\xi)y'_2(\xi)  - y_2(\xi)y'_1(\xi)  = - \frac{1}{w(\xi)}\int_{\xi_K}^{\xi_L} w(t)\;\mathrm{d}t \,\,\,(\not= 0).
	\end{eqnarray}
	The standard  Green's function construction \cite{CoddingtonLevinson1955}  gives
	\begin{eqnarray*}
		G(\xi,s) = \left\{
		\begin{aligned}
			&\frac{1}{-\alpha W(s)}y_1(\xi)y_2(s),\quad \xi_K\leq\xi\leq s,\\
			&\frac{1}{-\alpha  W(s)}y_1(s)y_2(\xi), \quad  s\leq\xi\leq\xi_L.
		\end{aligned}
		\right.
	\end{eqnarray*}
	Substituting \eqref{eq,y} and \eqref{eq,W}, we obtain the explicit expression
	\begin{eqnarray}\label{eq:G(xi,s)}
		G(\xi,s) = \left\{
		\begin{aligned}
			&\frac{\big(\int_{\xi_K}^{\xi} w(t)\,\mathrm{d}t\big)\big(\int_{s}^{\xi_L} w(t)\,\mathrm{d}t\big)}
			{\alpha\, w(\xi)\int_{\xi_K}^{\xi_L} w(t)\,\mathrm{d}t},\quad \xi_K\leq\xi\leq s,\\[6pt]
			&\frac{\big(\int_{\xi_K}^{s} w(t)\,\mathrm{d}t\big)\big(\int_{\xi}^{\xi_L} w(t)\,\mathrm{d}t\big)}
			{\alpha\, w(\xi)\int_{\xi_K}^{\xi_L} w(t)\,\mathrm{d}t},\quad  s\leq\xi\leq\xi_L.
		\end{aligned}
		\right.
	\end{eqnarray}
	\subsection{Control volume integration}
 Now we are ready to construct the exact normal flux on each control volume edge~$e$.
		Using the local coordinate system on $e$ (see Figure~\ref{fig,1}),
		we integrate the normal flux~$j_{\bm{n}}=\alpha \partial u/\partial\xi-\beta_{\bm{n},\eta}u$ along~$e$.
		Let the local coordinates of the two endpoints~$P_{e,1}$ and~$P_{e,2}$
		of edge~$e$ be~$(0,\eta_{e,1})$ and~$(0,\eta_{e,2})$, respectively.
		From~\eqref{eq:u=uh+ui}, we obtain
	\begin{align*}
		J_e:&=\int_{\eta_{e,1}}^{\eta_{e,2}} j_{\bm{n}}\big|_{\xi=0}\; \mathrm{d}\eta \\
		&= \int_{\eta_{e,1}}^{\eta_{e,2}}  (\alpha\frac{\partial u^h}{\partial \xi}-\beta_{\bm{n},\eta} u^h)\big|_{\xi=0} \;\mathrm{d}\eta + \int_{\eta_{e,1}}^{\eta_{e,2}}  (\alpha\frac{\partial u^i}{\partial \xi}-\beta_{\bm{n},\eta} u^i)\big|_{\xi=0} \;\mathrm{d}\eta \\
		&=:J_e^h+J_e^i.
	\end{align*}
	We evaluate the homogeneous part $J_e^h$ using \eqref{eq:homogeneous equation,1}. In view of \eqref{eq: homogeneous normal flux}, we obtain that
	\begin{eqnarray}\label{eq:J,h}
		J_e^h =  \int_{\eta_{e,1}}^{\eta_{e,2}} \alpha\, \frac{u_{L,\eta}w(\xi_L) - u_{K,\eta}}{\int_{\xi_K}^{\xi_L} w(t)\,\mathrm{d}t} \;\mathrm{d}\eta.
	\end{eqnarray}
	To  compute the inhomogeneous part  $J_e^i$, we use the Green's function  \eqref{eq:G(xi,s)}  and introduce
	\begin{align}
		Q(s):=\Big(\alpha\frac{\partial G(\xi,s)}{\partial \xi}-\beta_{\bm{n},\eta}(\xi)  G(\xi,s)\Big)\Big|_{\xi=0}=\left\{ \begin{aligned}
			&\frac{\int_{s}^{\xi_L} w(t)\,\mathrm{d}t}{\int_{\xi_K}^{\xi_L} w(t)\,\mathrm{d}t},\quad s\geq 0,\\[4pt]
			&-\frac{\int_{\xi_K}^{s} w(t)\,\mathrm{d}t}{\int_{\xi_K}^{\xi_L} w(t)\,\mathrm{d}t},\quad  s\leq 0,
		\end{aligned} \right.
		\label{eq:Q}
	\end{align}
	Note that $Q(s)$ depends on $\eta$ through $\beta_{\bm{n},\eta}(0)$.
	From \eqref{eq:inhomogeneous equation} and the definition \eqref{eq:local} of $S_\eta$, we get
		\begin{align}
		J_e^i 
		 &= \int_{\eta_{e,1}}^{\eta_{e,2}}  \bigg(\alpha\frac{\partial}{\partial \xi}\Big(\int_{\xi_K}^{\xi_L}G(\xi,s)S_{\eta}(s)\;\mathrm{d}s \Big)-\beta_{\bm{n},\eta} \Big(\int_{\xi_K}^{\xi_L}G(\xi,s)S_{\eta}(s)\;\mathrm{d}s \Big)\bigg)\bigg|_{\xi=0} \mathrm{d}\eta \nonumber\\
		 &=\int_{\eta_{e,1}}^{\eta_{e,2}}\int_{\xi_K}^{\xi_L}  \Big(\alpha\frac{\partial G}{\partial \xi}(0,s)-\beta_{\bm{n},\eta}(0)  G(0,s)\Big)S_{\eta}(s) \,\mathrm{d}s \,\mathrm{d}\eta \nonumber\\
		&=\int_{\eta_{e,1}}^{\eta_{e,2}}  \int_{\xi_K}^{\xi_L}Q(s) f \, \mathrm{d}s \mathrm{d}\eta +\int_{\eta_{e,1}}^{\eta_{e,2}} \int_{\xi_K}^{\xi_L} Q(s)\frac{\partial j_{\bm{t}}}{\partial \eta}\; \mathrm{d}s\,\mathrm{d}\eta.  \label{eq:J,i}
	\end{align}

	Integrating \eqref{eq:local eq} over a control volume $V\in\mathcal{T}_h^{*}$ associated with a point $P_V$ and applying the divergence theorem gives
	\begin{eqnarray}\label{eq: integral on control volume}
		\int_{V} -\nabla\cdot \bm{j}\; \mathrm{d}\bm{x} = \sum_{e\subset \partial V}-\int_{e}  \bm{j} \cdot \bm{n}_e \;\mathrm{d}s = 	\int_{V} f  \;\mathrm{d}\bm{x}.
	\end{eqnarray}
	Combining this with the definition of $J_e$, we get
	\begin{eqnarray*}
		\sum_{e\subset \partial V}-J_e =\sum_{e\subset \partial V}-(J_e^h+J_e^i) = \int_{V} f  \;\mathrm{d}\bm{x}.
	\end{eqnarray*}
	Inserting \eqref{eq:J,h} and \eqref{eq:J,i} gives a formulation that incurs no error from the original equation:
	\begin{align}
		K_{V,1}(u) +K_{V,2}(u) =   F_{V,1}(f)  +F_{V,2}(f)  \quad \forall V\in\mathcal{T}_h^{*},\label{eq:exact formulation}
	\end{align}
	with
	\begin{flalign*}
			&K_{V,1}(u) = -\sum_{e\subset \partial V} \int_{\eta_{e,1}}^{\eta_{e,2}} \alpha\, \frac{u_{L_e,\eta}w_e(\xi_{L_e}) - u_{K_{e},\eta}}{\int_{\xi_{K_e}}^{\xi_{L_e}} w_e(t)\,\mathrm{d}t} \;\mathrm{d}\eta, \\
			&K_{V,2}(u) = -\sum_{e\subset \partial V}\int_{\eta_{e,1}}^{\eta_{e,2}}\int_{\xi_{K_e}}^{\xi_{L_e}} Q_e(s)\frac{\partial j_{\bm{t}}}{\partial \eta}\; \mathrm{d}s \, \mathrm{d}\eta ,\\
			&F_{V,1}(f) = \int_{V} f  \;\mathrm{d}\bm{x}, \\
			&F_{V,2}(f) =  \sum_{e\subset \partial V}\int_{\eta_{e,1}}^{\eta_{e,2}}  \int_{\xi_{K_e}}^{\xi_{L_e}}Q_e(s) f(s,\eta)\,\mathrm{d}s\, \mathrm{d}\eta.
	\end{flalign*}
	Here we note that the local coordinate system $(\xi,\eta)$ itself depends on the edge~$e$; in a more precise notation we would have $\xi=\xi(e)$ and $\eta=\eta(e)$; we omit this explicit dependence for brevity. 
	The subscript $e$ on $w$, $Q$, $\xi$, $\eta$ indicates that these quantities are defined in the local coordinate system of edge~$e$. Hence  $w_e$ and  $Q_e$ are the edge-dependent versions of the functions from \eqref{eq,w} and   \eqref{eq:Q},  respectively. 
Relation \eqref{eq:exact formulation}  serves as the starting point for constructing our numerical schemes.
		\begin{remark}\label{remark,1}
			The derivation of the exact relation \eqref{eq:exact formulation} does not require endpoints \(P_{K_e}\) and \(P_{L_e}\) to be
			the centroids of the adjacent control volumes; equivalently, \(\xi_{K_e}\) and \(\xi_{L_e}\) need
			not represent the actual distances from the control volume centers to the edge \(e\). For each
			edge \(e\), the parameters \(\xi_{K_e}\) and \(\xi_{L_e}\) are not fixed by the mesh  while \eqref{eq:exact formulation} remains exact; their difference
			may be a constant multiple of the mesh size, while the constant can be chosen with some freedom. 
		\end{remark}
		\begin{remark}\label{remark,2}
			Another consequence of the freedom emphasized in Remark~\ref{remark,1} concerns edges near
			the domain boundary~\(\partial\Omega\).  The rectangle
			\([\xi_{K_e},\xi_{L_e}]\times[\eta_{e,1},\eta_{e,2}]\) in the local coordinate system
			of an edge~\(e\) may partly extend outside~\(\Omega\) when the edge is adjacent to
			the boundary.  In that case the integrals on 	\([\xi_{K_e},\xi_{L_e}]\times[\eta_{e,1},\eta_{e,2}]\) involving the source term~\(f\) become ill-defined.
			However, by virtue of the flexibility discussed above, the parameters
			\(\xi_{K_e}\) and \(\xi_{L_e}\) can always be chosen so that the rectangle lies
			entirely within~\(\Omega\).  Hence the exact relation \eqref{eq:exact formulation}
			remains meaningful and valid for all edges of the dual mesh.
		\end{remark}
	\section{Construction of complete flux schemes}	\label{sec:4}
	This section develops several complete flux schemes from the exact flux relation \eqref{eq:exact formulation}. 
	Throughout this section, for clarity of exposition, we assume that $\bm{\beta}$ is piecewise constant with respect to each control volume edge $e$. Then, the normal component $\beta_{\bm{n},\eta}(\xi):= \bm{\beta}(\xi,\eta)\cdot \bm{n}_e$ is
	piecewise constant on each edge $e$. We denote this edge-dependent constant by $\beta_{\bm{n}}$.  
	Under this simplification, the edge-dependent weight function $w_e$ and the kernel $Q_{e}$ in the exact  flux relation \eqref{eq:exact formulation} can be evaluated in a closed form:
	\begin{equation}\label{eq: closed form}
		w_e(\xi) = e^{-\frac{\xi-\xi_{K_e}}{\xi_{L_e}-\xi_{K_e}}z_e},\qquad
		Q_{e}(s) =
		\begin{cases}
			\displaystyle\frac{e^{\frac{\xi_{L_e}-s}{\xi_{L_e}-\xi_{K_e}}z_e}-1}{e^{z_e}-1}, & s\ge 0\\[8pt]
			\displaystyle-\frac{1-e^{-\frac{s-\xi_{K_e}}{\xi_{L_e}-\xi_{K_e}}z_e}}{1-e^{-z_e}}, & s\le 0
		\end{cases}.
	\end{equation}
	The quantity $z_e$ appearing above is the local P\'{e}clet number of the edge~$e$, defined by
	\begin{equation}\label{eq:local_Pe}
		z_e := \frac{\beta_{\bm{n}}}{\alpha}(\xi_{L_e}-\xi_{K_e}).
	\end{equation}
		We emphasize that $z_e$ is distinct from the usual mesh P\'{e}clet number: the quantity
		$\xi_{L_e}-\xi_{K_e}$ is a free parameter whose definition is not fixed by the element size~$h$;
		it may be a constant multiple of $h$, although the constant can be freely chosen (cf.\ Remark~\ref{remark,1}).
		Consequently $z_e$ can be adjusted without changing the mesh. 
	
    This section is structured as follows.
	Subsection~\ref{subsec:second_order_2d} presents second-order schemes for general two-dimensional meshes, using a piecewise linear or bi-linear approximation space. 
  Subsection~\ref{subsec:higher_order_rect} 
	first gives the abstract higher-order formulation for an arbitrary polynomial degree~$k$ on general
	meshes, and then specializes it to rectangular meshes where two concrete quadratic ($k=2$)
	approximation spaces: a $C^0$ finite element space and a $C^1$ B-spline space, are described in detail. 
	Finally, Subsection~\ref{subsec:extension_3d} outlines the extension to three dimensions, highlighting
	the treatment of tangential fluxes on polygonal faces.
	\subsection{Second-order complete flux schemes in two dimensions}\label{subsec:second_order_2d}
	The second-order scheme is obtained by inserting a piecewise linear (or bi-linear) numerical
	solution $u_h$ into the  exact flux relation \eqref{eq:exact formulation}:
	\begin{equation}\label{eq:exact formulation,linearuh}
		K_{V,1}(u_h) +K_{V,2}(u_h) =   F_{V,1}(f)  +F_{V,2}(f)  \quad \forall V\in\mathcal{T}_h^{*}.
	\end{equation}
	The control volumes $\mathcal{T}_h^{*}$
	are the standard barycentric dual mesh described in Figure~\ref{fig,0}:
	each control volume is centered at a vertex of the primary mesh, and the control
	volume edges connect the barycenters  of the primary elements to the
	midpoints of their edges.  
	In what follows, we detail the evaluation of each functional in \eqref{eq:exact formulation,linearuh}.

	\paragraph{Normal flux term $K_{V,1}(u_h)$ for linear or bi-linear $u_h$}
	Using the piecewise constant simplification of $\bm{\beta}$, the edge-dependent weight function $w_e$ takes
	the closed form \eqref{eq: closed form}. Substituting it into $K_{V,1}(u_h)$ in
	\eqref{eq:exact formulation} and evaluating the $\eta$-integral gives
	\begin{align}\label{eq,Kv1uh}
		K_{V,1}(u_h) = \sum_{e\subset \partial V} \beta_{\bm{n}}\frac{\overline{u}_{L_e,\eta}   e^{-z_e} - \overline{u}_{K_e,\eta} }{e^{-z_e}-1},
	\end{align}
	where  $z_e$ is the local P\'{e}clet number \eqref{eq:local_Pe}, and 
	\begin{eqnarray*}
		\overline{u}_{K_e,\eta} = \int_{\eta_{e,1}}^{\eta_{e,2}}  u_h(\xi_{K_e},\eta) \;\mathrm{d}\eta,\qquad  \overline{u}_{L_e,\eta} = \int_{\eta_{e,1}}^{\eta_{e,2}}  u_h(\xi_{L_e},\eta) \;\mathrm{d}\eta.
	\end{eqnarray*}
	For a linear or bi-linear $u_h$, the $\eta$-dependence is linear, so the midpoint rule gives
	\begin{equation*}
		\overline{u}_{K_e,\eta} = u_h(\xi_{K_e},0)\,|e|,\qquad
		\overline{u}_{L_e,\eta} = u_h(\xi_{L_e},0)\,|e|,
	\end{equation*}
	where $|e| = \eta_{e,2}-\eta_{e,1}$ is the edge length.  Introducing the Bernoulli function
	\begin{equation}\label{eq:Bernoulli}
		B(z) = \frac{z}{e^{z}-1}\quad (z\neq 0),\qquad B(0) = 1,
	\end{equation}
	we obtain the compact form of \eqref{eq,Kv1uh} as follows:
	\begin{align}
		K_{V,1}(u_h) &= \sum_{e\subset\partial V} -\frac{\alpha|e|}{\xi_{L_e}-\xi_{K_e}}
		\Big(B(z_e)\,u_h(\xi_{L_e},0) - B(-z_e)\,u_h(\xi_{K_e},0)\Big)\nonumber \\
		&=\sum_{e\subset\partial V} -\frac{ \beta_{\bm{n}}|e|}{z_e}
		\Big(B(z_e)\,u_h(\xi_{L_e},0) - B(-z_e)\,u_h(\xi_{K_e},0)\Big)     \label{eq:Kv1_bernoulli}
	\end{align}
	
	\paragraph{Tangential flux $K_{V,2}(u_h)$ for  linear or bi-linear $u_h$}
	Since $\bm{\beta}$ is piecewise constant,  the edge-dependent  kernel $Q_{e}(s)$ defined in \eqref{eq: closed form}
	is independent of $\eta$.  Consequently the double integral in $K_{V,2}(u_h)$
	simplifies by integrating $\partial j_{\bm{t}}/\partial\eta$ in $\eta$ first, yielding
	\begin{align}\label{eq,Kv2 general}
		K_{V,2}(u_h) = -\sum_{e\subset \partial V}\int_{\xi_{K_e}}^{\xi_{L_e}} Q_e(s)\Big( j_{\bm{t}}(s,\eta_{e,2})-j_{\bm{t}}(s,\eta_{e,1})\Big)\; \mathrm{d}s,
	\end{align}
	where  $j_{\bm{t}} = \alpha\partial u_h/\partial\eta - \beta_{\bm{t}} u_h$ with piecewise constant $\beta_{\bm{t}} = \bm{\beta}\cdot\bm{t}_e$. 	For a linear (or bi-linear) $u_h$, the derivative $\partial u_h/\partial\eta$ is constant along
	each edge. Hence the $\alpha\partial u_h/\partial\eta$ part cancels in the difference
	and only $-\beta_{\bm{t}}u_h$ remains:
	\begin{equation}\label{eq,Kv2uh}
		K_{V,2}(u_h) = \sum_{e\subset \partial V}\int_{\xi_{K_e}}^{\xi_{L_e}} Q_e(s)\,\beta_{\bm{t}}\,
		\Big(u_h(s,\eta_{e,2})-u_h(s,\eta_{e,1})\Big)\,\mathrm{d}s.
	\end{equation}
	On each edge, $u_h$ is linear in $s$; then
	\begin{equation}\label{eq: uh linear}
		u_h(s,\eta_{e,i}) = \frac{s-\xi_{K_e}}{\xi_{L_e}-\xi_{K_e}} u_h(\xi_{L_e},\eta_{e,i})
		+ \frac{\xi_{L_e}-s}{\xi_{L_e}-\xi_{K_e}} u_h(\xi_{K_e},\eta_{e,i}),\qquad i=1,2.
	\end{equation}
	Substituting this into \eqref{eq,Kv2uh} shows that $K_{V,2}(u_h)$ involves integrals of
	$Q_e(s)$ multiplied by powers of $s$.  It is therefore natural to introduce the general moments
\begin{equation}\label{eq:Im}
	I_{Q_e,m} := \int_{\xi_{K_e}}^{\xi_{L_e}} Q_e(s)\,s^{m}\,\mathrm{d}s,\qquad m=0,1,2,\dots
\end{equation}
Splitting the integral at $s=0$ and inserting the two branches of $Q_e(s)$ from \eqref{eq: closed form}
leads, after a direct calculation, to the compact representation
	\[
	\begin{aligned}
		I_{Q_e,m} &=
		\frac{(\xi_{L_{e}}-\xi_{K_{e}})^{m+1}}{e^{z_{e}}-1}\left( \frac{e^{m_{L_{e}} z_{e}}}{z_{e}^{m+1}} \int_{0}^{m_{L_{e}} z_{e}} t^{m} e^{-t}\, \mathrm{d}t  -\frac{m_{L_{e}}^{m+1}}{m+1} \right)  \\
		&\quad +\frac{(-1)^{m+1}(\xi_{L_{e}}-\xi_{K_{e}})^{m+1}}{e^{-z_{e}}-1} \left(  \frac{e^{-m_{K_{e}} z_{e}}}{(-z_{e})^{m+1}} \int_{0}^{-m_{K_{e}} z_{e}} t^{m} e^{-t}\, \mathrm{d}t -\frac{m_{K_{e}}^{m+1}}{m+1} \right) ,
	\end{aligned}
	\]
	where  $z_e$ is the local P\'{e}clet number \eqref{eq:local_Pe} and
	\begin{equation*}
		m_{L_e} =  \frac{\xi_{L_e}}{\xi_{L_e}-\xi_{K_e}}, \quad m_{K_e} =  \frac{-\xi_{K_e}}{\xi_{L_e}-\xi_{K_e}}=1-m_{L_e}.
	\end{equation*}
	The remaining integrals are of the elementary type $\int_{0}^{x} t^{m} e^{-t} \mathrm{d}t$; for non‑negative integers $m$, they can be written in a closed form as
	\[
	\int_{0}^{x} t^{m} e^{-t}\, \mathrm{d}t = m! \left( 1 - e^{-x} \sum_{k=0}^{m} \frac{x^{k}}{k!} \right).
	\]
	Substituting this into the expression for $I_{Q_e,m}$ yields explicit formulas in terms of elementary functions.
	For the lowest two moments needed in the second‑order scheme we obtain
	\[
	\begin{aligned}
		I_{Q_e,0}
		&= (\xi_{L_{e}} - \xi_{K_{e}})
		\Big[ B_{1}(z_e, m_{L_{e}}) - B_{1}(-z_e, m_{K_{e}}) \Big], \\[4pt]
		I_{Q_e,1}
		&= (\xi_{L_{e}} - \xi_{K_{e}})^{2}
		\Big[ B_{2}(z_e, m_{L_{e}}) + B_{2}(-z_e, m_{K_{e}}) \Big],
	\end{aligned}
	\]
	with the auxiliary functions
	\begin{equation}\label{eq:Bernoulli,B1}
		B_{1}(z,m) = \frac{e^{mz} - 1 - mz}{z(e^{z} - 1)}\quad (z\neq 0),\qquad B_{1}(0,m) = \frac{m^{2}}{2},
	\end{equation}
	and
	\begin{equation}\label{eq:Bernoulli,B2}
		B_{2}(z,m) = \frac{e^{mz} - 1 - mz - \frac{1}{2}(mz)^{2}}{z^{2}(e^{z} - 1)}\quad (z\neq 0),\qquad B_{2}(0,m) = \frac{m^{3}}{6}.
	\end{equation}
	Now we are ready to obtain $K_{V,2}(u_h)$ in a closed form.
	By   \eqref{eq,Kv2uh} and \eqref{eq: uh linear},  using the
	moments $I_{Q_e,0}$, $I_{Q_e,1}$ gives
	\begin{align}\label{eq, Kv2,linear}
		K_{V,2}(u_h) =&\sum_{e\subset \partial V} \frac{I_{Q_e,1}-\xi_{K_e}I_{Q_e,0}}{\xi_{L_e}-\xi_{K_e}} \beta_{\bm{t}}\bigg( u_h(\xi_{L_e},\eta_{e,2}) - u_h(\xi_{L_e},\eta_{e,1})\bigg) \nonumber\\
		& + \sum_{e\subset \partial V} \frac{\xi_{L_e}I_{Q_e,0}-I_{Q_e,1}}{\xi_{L_e}-\xi_{K_e}} \beta_{\bm{t}}\bigg( u_h(\xi_{K_e},\eta_{e,2}) - u_h(\xi_{K_e},\eta_{e,1})\bigg).
	\end{align}

   \paragraph{Source terms $F_{V,1}$ and $F_{V,2}$}
	The volume integral $F_{V,1}(f)$ is approximated by a quadrature
	rule of sufficient accuracy on the control volume $V$.  The edge-related source term
	$F_{V,2}(f)$
	is evaluated by quadrature on each edge's $(\xi,\eta)$-rectangle. The numerical integration of $f$ against $Q_e(s)$ requires the value of $Q_e(s)$ at the
	quadrature points, and it is important to verify that this evaluation is stable for
	arbitrary choices of the local P\'{e}clet number $z_e $.
 In fact, each branch of $Q_{e}(s)$ in \eqref{eq: closed form} contains a unified
	common part whose stable evaluation is the only ingredient to be checked, namely
	the function
	\begin{equation}\label{eq:Q_exp_form}
		\widetilde Q(a; w) := \frac{e^{a w} - 1}{e^{w} - 1},\qquad a\in[0,1].
	\end{equation}
	For $w=0$, the Taylor expansion $e^{w}=1+w+\frac12 w^2+\cdots$ gives $\widetilde Q(a;0)=a$.
	For $w>0$, the numerator and denominator both grow exponentially; rewriting
	\begin{equation*}
		\widetilde Q(a; w) = \frac{e^{(a-1)w} - e^{-w}}{1 - e^{-w}},
	\end{equation*}
	replaces them by only exponentially decaying quantities, which can be evaluated
	without overflow.  For $w<0$ the original form \eqref{eq:Q_exp_form} is already stable.
	Thus $\widetilde Q(a;w)$ is well-defined and computable for any $w\in\mathbb{R}$, and
	consequently $Q_{e}(s)$ can be evaluated stably at all required points.

At the end of this subsection, we discuss the evaluation of the special functions $B(z)$, $B_1(z,m)$ and
$B_2(z,m)$ that appear in $K_{V,1}(u_h)$ and $K_{V,2}(u_h)$
(see \eqref{eq:Kv1_bernoulli} and \eqref{eq, Kv2,linear}, respectively).  All three can be evaluated stably for arbitrary real arguments,
without suffering from overflow or indeterminate forms when the local
P\'{e}clet number $|z_e|$ is large or near zero.
The following lemma summarizes their basic analytic properties, from which a
simple piecewise evaluation strategy follows.
	\begin{lemma}[Stable evaluation of $B$, $B_1$ and $B_2$]
		\label{lem:Bstable}
		For $m\in[0,1]$ and $z\in\mathbb{R}$, define three functions $B(z)$ as \eqref{eq:Bernoulli}, $B_1(z,m)$ as \eqref{eq:Bernoulli,B1}  and $B_2(z,m)$ as \eqref{eq:Bernoulli,B2}.
		For fixed $m>0$, all three are strictly decreasing in $z$. Their asymptotic behavior is
		\[
		\lim_{z\to+\infty} B(z)=0,\quad \lim_{z\to-\infty}B(z)=-z,
		\]
		\[
		\lim_{z\to+\infty} B_1(z,m)=0,\quad \lim_{z\to-\infty} B_1(z,m)=m,
		\]
		\[
		\lim_{z\to+\infty} B_2(z,m)=0,\quad \lim_{z\to-\infty} B_2(z,m)=\frac{m^2}{2}.
		\]
		Consequently, the functions remain bounded for $z\ge0$ and grow at most linearly for $z<0$. Evaluating them by switching between Taylor series (for $|z|\ll1$), the exact definition (moderate $|z|$), and the asymptotic forms (large $|z|$) guarantees full accuracy and avoids overflow.
	\end{lemma}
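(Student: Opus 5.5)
The plan is to prove monotonicity through a chain of antiderivative identities in $m$, which reduces everything to the monotonicity of the single kernel $\widetilde Q(a;z)$ from \eqref{eq:Q_exp_form}. The limits and the evaluation strategy then follow by elementary asymptotics.

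\textbf{Monotonicity.} I would begin with $B$ itself. Differentiating gives
\[
B'(z)=\frac{e^z-1-ze^z}{(e^z-1)^2},
\]
and the numerator $g(z)=e^z-1-ze^z$ satisfies $g(0)=0$ and $g'(z)=-ze^z$. Hence $g<0$ for $z\neq0$, so $B$ is strictly decreasing.

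For $B_1$ and $B_2$, the key observation is obtained by differentiating the definitions \eqref{eq:Bernoulli,B1}--\eqref{eq:Bernoulli,B2} with respect to $m$:
\[
\partial_m B_1(z,m)=\frac{e^{mz}-1}{e^z-1}=\widetilde Q(m;z),\qquad
\partial_m B_2(z,m)=B_1(z,m).
\]
Together with $B_1(z,0)=B_2(z,0)=0$, this gives
\[
B_1(z,m)=\int_0^m\widetilde Q(a;z)\,\mathrm{d}a,\qquad B_2(z,m)=\int_0^m B_1(z,a)\,\mathrm{d}a.
\]
These identities also hold at $z=0$ by continuity. So it suffices to show that $z\mapsto\widetilde Q(a;z)$ is strictly decreasing for each $a\in(0,1)$. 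Strict decrease then passes to $B_1$ for $m>0$, and from $B_1$ to $B_2$.

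\textbf{Main step: monotonicity of $\widetilde Q$.} Write
\[
\log\widetilde Q(a;z)=\log\frac{e^{az}-1}{az}-\log\frac{e^z-1}{z}+\log a .
\]
Set $\psi(x):=x\,\frac{\mathrm d}{\mathrm dx}\log\frac{e^x-1}{x}=\frac{xe^x}{e^x-1}-1=B(-x)-1$. Then
\[
\partial_z\log\widetilde Q(a;z)=\frac{\psi(az)-\psi(z)}{z}.
\]
Since $B$ is strictly decreasing, $\psi$ is strictly increasing. For $z>0$ we have $az<z$, so the numerator is negative. For $z<0$ we have $az>z$, so the numerator is positive while the denominator is negative. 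In both cases the derivative is negative. The point $z=0$ is handled by continuity, or by noting the derivative there equals $-\psi'(0)(1-a)<0$. I expect this sign bookkeeping to be the only delicate point; the direct approach of writing $B_1$ as an integral of $B(z)e^{tz}$ fails, because those kernels are not monotone.

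\textbf{Limits and conclusion.} As $z\to+\infty$, each numerator grows at most like $e^{mz}$ with $m\le1$, while the denominator grows like $z^ke^z$ with $k\ge1$. Hence all three functions tend to $0$; when $m=1$ this still holds because of the extra factor $z$ or $z^2$. As $z\to-\infty$ we have $e^z,e^{mz}\to0$. Then $B(z)\sim -z$, which is how I would read the stated limit. Also
\[
B_1\to\frac{-1-mz}{-z}\to m,\qquad B_2\to\frac{-\tfrac12 m^2z^2}{-z^2}=\frac{m^2}{2}.
\]
Boundedness for $z\ge0$ follows from monotonicity and the values at $z=0$, namely $1$, $m^2/2$ and $m^3/6$. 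Growth is at most linear for $z<0$ because $B$ grows like $-z$ and $B_1,B_2$ are bounded there.

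The evaluation strategy follows. Near $0$, the defining quotients suffer cancellation, so one uses Taylor expansions whose leading terms are the stated values at $z=0$. For moderate $|z|$ the definitions are harmless. For large $|z|$, one factors out the dominant exponential, exactly as was done for $\widetilde Q$, or switches to the asymptotic forms above; this avoids overflow.
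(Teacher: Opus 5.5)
Your proof is correct. Your treatment of the limits matches the paper's, including reading $\lim_{z\to-\infty}B(z)=-z$ as $B(z)\sim -z$. For monotonicity, however, you take a genuinely different route.

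The paper computes $B'(z)=\frac{e^{z}(1-z)-1}{(e^{z}-1)^{2}}$ and then only asserts that ``the same sign pattern holds for $B_1$ and $B_2$ after similar derivative calculations.'' Done directly, this means signing numerators such as $m(e^{mz}-1)\,z(e^{z}-1)-(e^{mz}-1-mz)(e^{z}-1+ze^{z})$ jointly in $(z,m)$, which the paper never carries out.

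You instead differentiate in $m$. From $\partial_m B_1=\widetilde Q(m;z)$, $\partial_m B_2=B_1$ and $B_1(z,0)=B_2(z,0)=0$, the question reduces to strict decrease of the single kernel $\widetilde Q(a;\cdot)$. The identity $\partial_z\log\widetilde Q(a;z)=(\psi(az)-\psi(z))/z$ with $\psi(x)=B(-x)-1$ then reduces this to the already proven monotonicity of $B$. Incidentally, $B_1(z,m)=\int_0^m\widetilde Q(a;z)\,\mathrm{d}a$ is exactly the computation behind the paper's formula for $I_{Q_e,0}$.

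Your route has several advantages. It gives a complete proof with no two-variable sign analysis. It explains why the property holds: $B_1$ and $B_2$ are iterated integrals of the kernel underlying $Q_e$ and inherit its monotonicity in the P\'eclet number, which you obtain as a by-product. It also extends by the same induction to the functions $(e^{mz}-\sum_{j=0}^{k}(mz)^j/j!)/(z^{k}(e^{z}-1))$ that appear when the higher moments $I_{Q_e,m}$ are written the same way, since each is the $m$-antiderivative of the previous one.

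You are also more accurate than the paper at $m=1$. There $B_1\sim 1/z$ and $B_2\sim 1/z^{2}$ as $z\to+\infty$, so the decay is algebraic, not exponential as the paper states.

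The only blemish is your blanket claim that the denominators grow like $z^{k}e^{z}$ with $k\ge 1$. This does not cover $B$ itself, where $k=0$, though $B(z)=z/(e^{z}-1)\to 0$ is immediate anyway.
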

	
	\begin{proof}
		Monotonicity follows from $B'(z)=\frac{e^{z}(1-z)-1}{(e^{z}-1)^2}\le0$, with strict inequality for $z\neq0$; the same sign pattern holds for $B_1$ and $B_2$ after similar derivative calculations. For $z\to+\infty$, $e^{z}$ dominates and all three vanish exponentially. For $z\to-\infty$, write $z=-t$ ($t\to+\infty$): $B(-t)\sim t=-z$, $B_1(-t,m)\sim\frac{-1+mt}{t}\to m$, $B_2(-t,m)\sim\frac{-1+mt-\frac12 m^2t^2}{-t^2}\to\frac{m^2}{2}$. The stability claim is immediate from these bounds and the piecewise evaluation strategy.
	\end{proof}
	\subsection{Higher-order complete flux schemes in two dimensions}\label{subsec:higher_order_rect} 
			The higher-order scheme is obtained by inserting a numerical solution $u_h$ of
	polynomial degree~$k$ ($k\ge 2$) into the exact flux relation
	\eqref{eq:exact formulation}.  The scheme takes the same form
	\begin{equation}\label{eq:CF3_scheme}
		K_{V,1}(u_{h}) + K_{V,2}(u_{h}) = F_{V,1}(f) + F_{V,2}(f)\quad \forall V\in\mathcal{T}_{h}^{*},
	\end{equation}
	and is valid on arbitrary two-dimensional meshes.
	We show the evaluation of each functional as follows.
	
	\paragraph{Normal flux term $K_{V,1}(u_h)$ for polynomial solution $u_h$ of degree~$k$}
	The expression \eqref{eq,Kv1uh} of $K_{V,1}(u_h)$
	remains valid.  Using the Bernoulli function $B(z)$ in \eqref{eq:Bernoulli},  we obtain
	\begin{equation*}
		K_{V,1}(u_h) = \sum_{e\subset\partial V} -\frac{ \beta_{\bm{n}}}{z_e}
		\Big(B(z_e)\,\overline{u}_{L_e,\eta} - B(-z_e)\,\overline{u}_{K_e,\eta}\Big).
	\end{equation*}
	The  difference from \eqref{eq:Kv1_bernoulli} is that $u_h(\xi_{K_e},\eta)$ and $u_h(\xi_{L_e},\eta)$ are now polynomials of degree~$k$ in~$\eta$, so the $\eta$-integrals $\overline{u}_{K_e,\eta}$ and $\overline{u}_{L_e,\eta}$ are evaluated by Gaussian
	quadrature.
	
	\paragraph{Tangential flux term $K_{V,2}(u_h)$ for polynomial solution $u_h$ of degree~$k$}
	Formula \eqref{eq,Kv2 general} for $K_{V,2}(u_h)$ remains valid.  For $k\ge 2$, the derivative
	$\partial u_h/\partial\eta$ varies along the edge; consequently the
	$\alpha\partial u_h/\partial\eta$ term in $j_{\bm{t}} = \alpha\partial u_h/\partial\eta
	- \beta_{\bm{t}} u_h$ no longer cancels and both parts contribute. We have
	\begin{align*}
		K_{V,2}(u_h) =& -\sum_{e\subset \partial V}\int_{\xi_{K_e}}^{\xi_{L_e}} Q_e(s)\alpha\Big( \frac{\partial u_h}{\partial \eta}(s,\eta_{e,2})-\frac{\partial u_h}{\partial \eta}(s,\eta_{e,1})\Big) \,\mathrm{d}s\nonumber \\
		&+\sum_{e\subset \partial V}\int_{\xi_{K_e}}^{\xi_{L_e}} Q_e(s)\,\beta_{\bm{t}}\,
		\Big(u_h(s,\eta_{e,2})-u_h(s,\eta_{e,1})\Big)\,\mathrm{d}s.
	\end{align*}
	Substituting the polynomial expansion of $u_h$ in $s$ shows that $K_{V,2}$ involves
	integrals of $Q_e(s)$ multiplied by $s^m$ ($m=0,1,\dots,k$).  These are the moments
	$I_{Q_e,m}$ given in Subsection~\ref{subsec:second_order_2d}, which have closed forms and can be evaluated stably.
	
	\paragraph{Source terms $F_{V,1}$ and $F_{V,2}$}
	The treatment is identical to that of the second-order scheme
	(Subsection~\ref{subsec:second_order_2d}) and is not repeated here.
	
	With all functionals at hand, the scheme \eqref{eq:CF3_scheme} is completely
	determined once a concrete discrete space and a matching control volume partition
	are chosen.  For $k\ge 2$, the standard barycentric dual mesh used in the
	second-order scheme is no longer adequate; the control volumes must be designed
	in accordance with the degrees of freedom of the discrete space, and their
	construction is more varied.  In what follows we present two third-order ($k=2$)
	schemes on rectangular meshes, each with its own dual partition:
	the bi-quadratic $C^{0}$ finite element space
	(Subsubsection~\ref{subsubsec:c0_quad}) and the bi-quadratic $C^{1}$ B-spline space
	(Subsubsection~\ref{subsubsec:c1_bspline}).
	For the $C^{0}$ Lagrange elements, richer control volume constructions of arbitrary
	order on quadrilateral and triangular	meshes  
	are discussed in~\cite{ChenZ2012,LiYangZhangPPR,LinY2015,Wang2016}.
	\subsubsection{Bi‑quadratic \(C^{0}\) finite element space}\label{subsubsec:c0_quad}
	The solution space is taken as the bi‑quadratic \(C^{0}\) finite element space, which corresponds
	to a vertex‑centered finite volume scheme with dual meshes formed piecewise around computing
	nodes.
	\begin{figure}[h!]
		\centering
		\subfigure[$p^*=\frac{1}{4}$] {
			\begin{minipage}[t]{.46\textwidth}
				\centering
				\includegraphics[width=100pt]{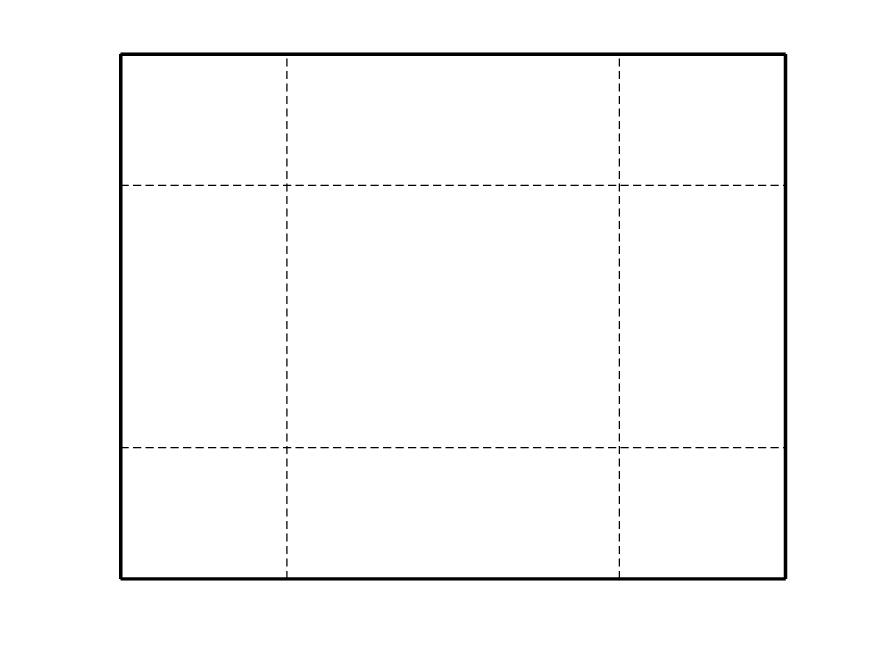}
				\label{fig,2,1}
			\end{minipage}
		}
		\subfigure[$p^*=\frac{1}{3}$]{
			\begin{minipage}[t]{.46\textwidth}
				\centering
				\includegraphics[width=100pt]{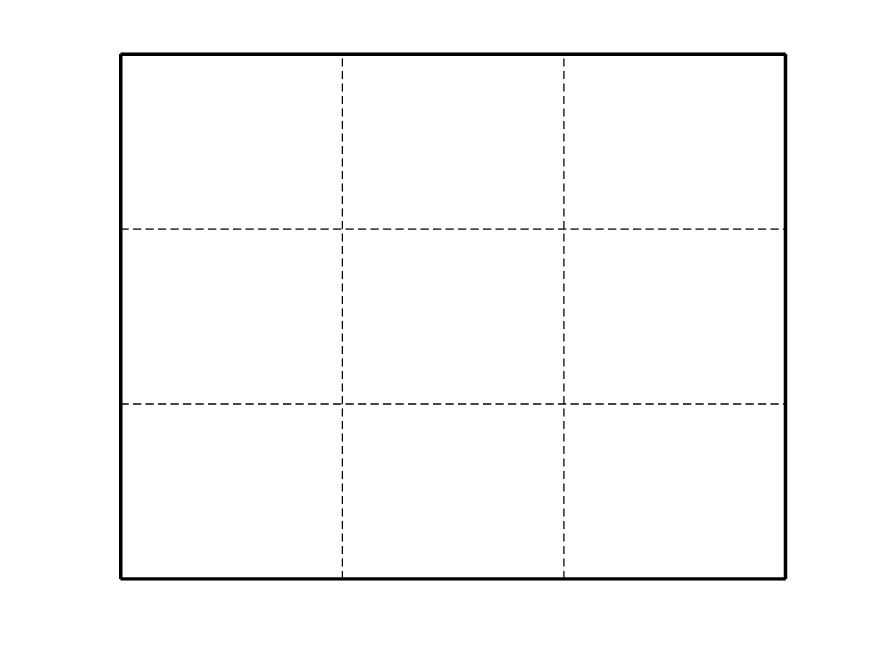}
				\label{fig,2,2}
			\end{minipage}
		}
		\caption{Construction of the element-level dual mesh (control volumes) for a bi-quadratic element, where \(p^*\)  denotes the ratio of the distance from a control volume
			boundary (dashed line) to the nearest element edge  (solid line) to the total span of the
			element in that direction; thus \(0<p^*<1/2\).}
		\label{fig,2}
	\end{figure}
	In this case, each rectangular element possesses nine
	computational nodes: the four vertices, the four edge midpoints, and the element centroid.
	Correspondingly, nine control volumes must be associated with each element. The construction
	of these element-level control volumes is not unique; several variants exist in the literature
	\cite{ZhangZ2015}. A convenient parametrization introduces a scalar \(p^*\) with \(0<p^*<1/2\) that
	specifies the ratio of the distance from a control volume
	boundary  to the nearest element edge to the total span of the
	element in that direction. Two typical choices are \(p^*=1/4\)  and \(p^*=1/3\), which are illustrated in Figure~\ref{fig,2}. Figure~\ref{fig,3,1} displays a rectangular primary mesh on a square domain
	\(\Omega\) together with the corresponding dual mesh (control volume partition) for \(p^*=1/4\).

	\begin{figure}[h!]
		\centering
		\subfigure[Bi-quadratic \(C^0\) finite element space] {
			\begin{minipage}[t]{.46\textwidth}
				\centering
				\includegraphics[width=130pt]{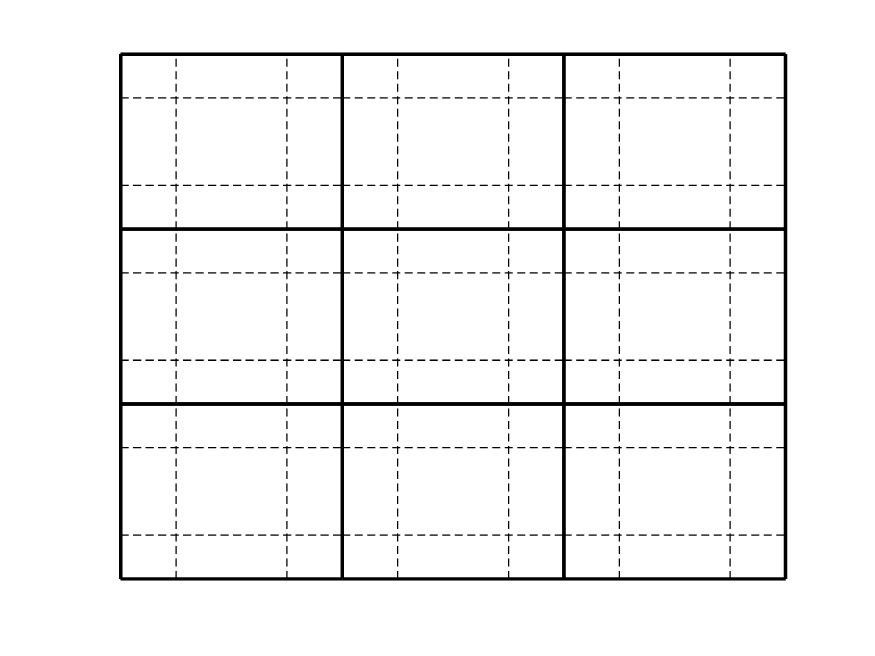}
				\label{fig,3,1}
			\end{minipage}
		}
		\subfigure[Bi-quadratic \(C^1\) B-spline space]{
			\begin{minipage}[t]{.46\textwidth}
				\centering
				\includegraphics[width=130pt]{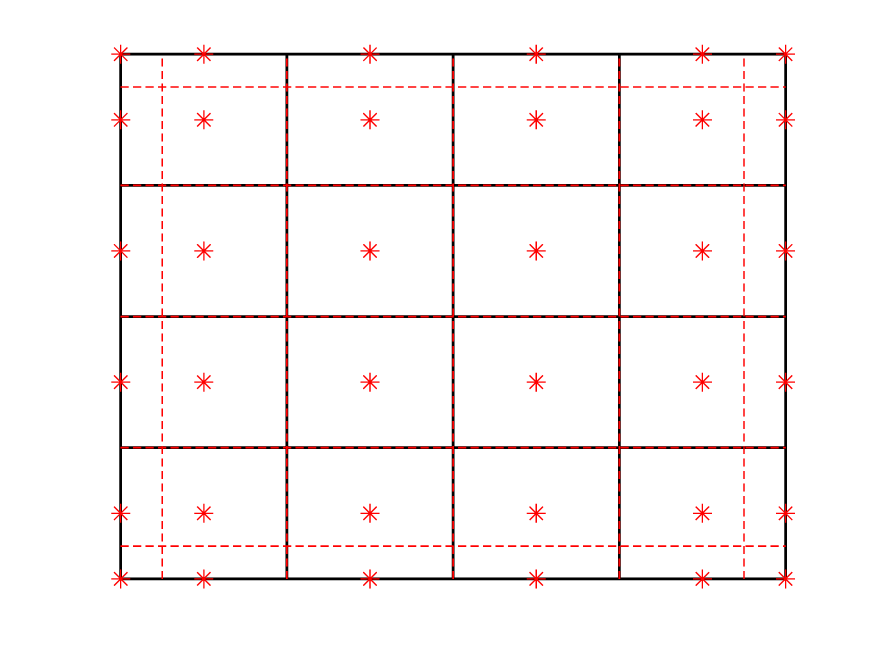}
				\label{fig,3,2}
			\end{minipage}
		}
		\caption{Primary rectangular meshes and corresponding dual meshes for different function spaces. The red $``\ast$" denotes the Greville abscissae for the bi-quadratic $C^1$ B-spline space.}
		\label{fig,3}
	\end{figure}
	\subsubsection{Bi‑quadratic \(C^{1}\) B‑spline space}\label{subsubsec:c1_bspline}
We consider a rectangular domain \(\Omega=[a,b]\times[c,d]\). 
	For the bi-quadratic \(C^1\) B-spline space, the primary mesh originates from a parametric
	construction. Let a strictly increasing sequence \(a = x_1 < x_2 < \dots < x_m = b\) be given.
	By repeating the first and last knots to multiplicity three, we obtain the augmented knot
	vector
	\[
	\Xi = \{x_1,x_1,x_1, x_2, x_3, \dots, x_{m-1}, x_m,x_m,x_m\}:= \{t_1, t_2, \dots, t_{m+4}\},
	\]
	which generates the one-dimensional quadratic \(C^1\) B-spline basis
	\(\{B_{i}(x)\}_{i=1}^{m+1}\) via the standard Cox--de Boor recurrence \cite{HughesCottrellBazilevs2005}.
	The support of each \(B_i\) is the interval \([t_i, t_{i+3}]\). An analogous construction
	in the second coordinate direction with knots \( c=y_1=y_1=y_1<y_2< \dots <y_{n-1}< y_{n}=y_{n}=y_{n} = d \) or  \(c = s_1=s_2=s_3<s_4< \dots <s_{n+1}< s_{n+2}=s_{n+3}=s_{n+4} = d\)  yields the basis
	\(\{B_{j}(y)\}_{j=1}^{n+1}\). The support of each \(B_j\) is the interval \([s_j, s_{j+3}]\). The tensor-product space on \(\Omega\) is spanned by
	\(\{B_i(x)B_j(y)\}\). The primary mesh (solid lines in Figure~\ref{fig,3,2}) consists of the knot lines
	\(x= x_k\) and \(y = y_l\); the solution is a polynomial within each rectangle and is
	\(C^1\) across the knot lines.
	To construct the dual mesh (control volumes), we associate with each tensor‑product
	basis function \(B_i(x)B_j(y)\) a rectangular control volume whose center is
	located at the \emph{Greville abscissae} \((t_i^*,s_j^*)\) ~\cite{KamberG2022,KamberG2020}, defined by
	\[
	t_i^* = \frac{t_{i+1} + t_{i+2}}{2},
	\qquad
	s_j^* = \frac{s_{j+1} + s_{j+2}}{2}.
	\]
	The Greville abscissa is the average of the two interior knots, and these points serve as the natural collocation nodes for
	the control volume scheme.  The boundaries of the control volumes (dashed
	lines in Figure~\ref{fig,3,2}) are placed midway between adjacent Greville abscissae in each
	direction. Because some of these boundaries coincide with the edges of the primary mesh, the dashed lines are colored red in the figure to distinguish them clearly from the primary mesh lines.  This dual partition guarantees that the number of control volumes equals the
	dimension of the approximation space, and is particularly well suited for higher-order complete flux discretizations.

	\subsection{Extension to three dimensions}\label{subsec:extension_3d}
	The extension of the  two-dimensional complete flux scheme to three-dimensional problems is conceptually straightforward yet notationally more involved. In the following we present the key steps, focusing on the modifications required when the control volume boundary $e$ (now a polygonal face) is treated.
	
	First, we introduce the local coordinate system in three dimensions. Let $e$ be a polygonal face of a control volume $V \in \mathcal{T}_h^*$ (the dual mesh). We introduce a local orthonormal coordinate system $(\xi,\eta_1,\eta_2)$ with origin at the centroid of $e$:
	\begin{itemize}
		\item[-] the $\xi$ axis is aligned with the unit normal $\bm{n}_e$ pointing outward from $V$;
		\item[-] $\bm{\eta}:=(\eta_1,\eta_2)$ denotes the orthogonal tangential coordinates on $e$.
	\end{itemize}
	Thus any point on $e$ has $\xi=0$, while the two neighboring control volume centers $P_K$ and $P_L$ lie at $\xi = \xi_K \leq 0$ and $\xi = \xi_L \geq 0$, respectively. The face $e$ is parameterized by $(\eta_1,\eta_2)$ over a two-dimensional domain.
	
	The total flux vector is $\bm{j} = \alpha\nabla u - \bm{\beta}u$. Its normal and tangential components with respect to $e$ are
	\begin{equation*}
		j_{\bm{n}} = \bm{j}\cdot\bm{n}_e = \alpha\frac{\partial u}{\partial\xi} - \beta_{\bm{n}} u,\qquad
		\bm{j}_{\bm{t}} = \alpha\nabla_{\!\bm{t}}u - \bm{\beta}_{\bm{t}} u,
	\end{equation*}
	where $\beta_{\bm{n}} = \bm{\beta}\cdot\bm{n}_e$, $\bm{\beta}_{\bm{t}}= \bm{\beta} - \beta_{\bm{n}}\bm{n}_e$, and $\nabla_{\!\bm{t}} = (\partial/\partial\eta_1,\partial/\partial\eta_2)^{T}$ is the tangential gradient. Note that $\bm{j}_{\bm{t}} $ is a two-dimensional vector field defined on the face $e$.
	Following a derivation completely analogous to the two-dimensional case \eqref{eq:exact formulation}, we obtain a similar formulation  that incurs no error from the original equation: 
	\begin{align}\label{eq:exact formulation,3d}
		K_{V,1}(u) +K_{V,2}(u) =   F_{V,1}(f)  +F_{V,2}(f)  \quad \forall V\in\mathcal{T}_h^{*},
	\end{align}
	with
	\begin{align*}
			K_{V,1}(u) &=-\sum_{e\subset \partial V} \int_{e}  \alpha\, \frac{u_{L_e,\bm{\eta}}w_e(\xi_{L_e}) - u_{K_{e},\bm{\eta}}}{\int_{\xi_{K_e}}^{\xi_{L_e}} w_e(t)\,\mathrm{d}t} \,\mathrm{d}\bm{\eta}  \\
			K_{V,2}(u) &=-\sum_{e\subset \partial V} \int_{e}\int_{\xi_{K_e}}^{\xi_{L_e}}Q_e(s) \nabla_{\bm{t}}\cdot \bm{j}_{\bm{t}}(s,\bm{\eta})\,\mathrm{d}s\,\mathrm{d}\bm{\eta} \\
			F_{V,1}(f) &= \int_{V} f \, \mathrm{d}\bm{x} \\
			F_{V,2}(f) &= \sum_{e\subset \partial V}  \int_{e}\int_{\xi_{K_e}}^{\xi_{L_e}}Q_e(s) f(s,\bm{\eta})\, \mathrm{d}s\,\mathrm{d}\bm{\eta}
	\end{align*}
	where $u_{K_e,\bm{\eta}}$ and $u_{L_e,\bm{\eta}}$ are  the traces of $u$ along the planes $\xi=\xi_K$ and $\xi=\xi_L$, respectively, and $\nabla_{\bm{t}} \cdot$ denotes the tangential divergence on $e$.


	We present here the second-order discretization; higher-order variants can be derived analogously, though they are notationally more involved, and are omitted for brevity. The control
	volume partition for the second-order scheme follows the standard construction: on
	hexahedral meshes, each hexahedron is divided into eight sub-volumes by connecting its
	centroid to the face centroids and edge midpoints; on tetrahedral meshes, each tetrahedron
	is divided into four sub-volumes by connecting its barycenter to the face barycenters and
	edge midpoints, which is a direct three-dimensional analogue of the barycentric construction. For hexahedral meshes, the construction of control volumes for  higher-order elements follows as a direct generalization of the two-dimensional construction. For tetrahedral meshes, the construction of control volumes for arbitrary higher-order finite volume schemes can be found in~\cite{LiYangZhangPPR}. 
	
	Let $u_h$ be a piecewise tri-linear approximation on a hexahedral mesh, or a piecewise linear approximation on a tetrahedral mesh. The second-order scheme is obtained by inserting $u_h$ into the  exact flux relation \eqref{eq:exact formulation,3d}. In what follows we detail the evaluation of each functional.
	
	\paragraph{Normal flux term $K_{V,1}(u_h)$ for linear or tri-linear $u_h$}
	The expression $K_{V,1}(u_h)$
	from \eqref{eq:exact formulation,3d} reduces exactly to the same form as in two
	dimensions, with the edge integral replaced by a face integral.  
	
		\paragraph{Tangential flux term $K_{V,2}(u_h)$ for linear or tri-linear $u_h$}

The treatment of the tangential contribution from \eqref{eq:exact formulation,3d} involves the tangential divergence of $\bm{j}_{\bm{t}} = \alpha\nabla_{\!\bm{t}}u_h - \bm{\beta}_{\bm{t}} u_h$. Applying the tangential divergence theorem on the face $e$ and noting that $Q_e(s)$ is independent of $\bm{\eta}$ gives
\begin{equation*}
	K_{V,2}(u_h) = -\sum_{e\subset\partial V} \int_{\xi_{K_e}}^{\xi_{L_e}} Q_e(s) \Big(\int_{\partial e} \bigl( \alpha\nabla_{\!\bm{t}}u_h - \bm{\beta}_{\bm{t}} u_h \bigr)\cdot\bm{n}_{\partial e}\,\mathrm{d}l\Big) \,\mathrm{d}s.
\end{equation*}
For a linear  (or tri-linear) $u_h$, the tangential gradient term integrates to zero on each face, so only the convection part survives.  Let $\partial e$ consist of $m$ edges $\gamma_1,\dots,\gamma_m$ with vertices $\bm{v}_1,\dots,\bm{v}_m$ in counterclockwise order.  On each edge $\gamma_j = \overline{\bm{v}_j\bm{v}_{j+1}}$,
\[
\int_{\gamma_j} (\bm{\beta}_{\bm{t}}\cdot\bm{n}_{\partial e})\,u_h\,\mathrm{d}l
= (\bm{\beta}_{\bm{t}}\cdot\bm{n}_{\partial e})\,|\gamma_j|\,
\frac{u_h(s,\bm{v}_j) + u_h(s,\bm{v}_{j+1})}{2},
\]
where $|\gamma_j|$ is the length of $\gamma_j$.  Assembling the edge contributions and inserting the linear interpolation of $u_h$ in $s$, the integration against $Q_e(s)$ reduces to the moments $I_{Q_e,0}, I_{Q_e,1}$ from \eqref{eq:Im}.  The final expression reads
\begin{align*}
	K_{V,2}(u_h)
	=&\sum_{e\subset \partial V} \sum_{j=1}^{m} \frac{I_{Q_e,1}-\xi_{K_e}I_{Q_e,0}}{2(\xi_{L_e}-\xi_{K_e})} |\gamma_j|(\bm{\beta}_{\bm{t}}\cdot\bm{n}_{\partial e,\gamma_j})
	\big(u_h(\xi_{L_e},\bm{v}_j)+u_h(\xi_{L_e},\bm{v}_{j+1})\big)\\
	+& \sum_{e\subset \partial V} \sum_{j=1}^{m} \frac{\xi_{L_e}I_{Q_e,0}-I_{Q_e,1}}{2(\xi_{L_e}-\xi_{K_e})} |\gamma_j|(\bm{\beta}_{\bm{t}}\cdot\bm{n}_{\partial e,\gamma_j})
	\big(u_h(\xi_{K_e},\bm{v}_j)+u_h(\xi_{K_e},\bm{v}_{j+1})\big).
\end{align*}
		\paragraph{Source terms $F_{V,1}$ and $F_{V,2}$}
	The volume integral $F_{V,1}(f)$ is approximated by a standard
	quadrature rule on the control volume $V$.  The face-related source term
	$F_{V,2}(f)$
	is evaluated by quadrature on each face's $(\xi,\bm{\eta})$-prism.  The weight function
	$Q_e(s)$ is one-dimensional (it depends only on $\xi$) and its stable evaluation for
	arbitrary P\'{e}clet number $z_e$ follows the discussion in
	Subsection~\ref{subsec:second_order_2d}.

	\section{Numerical experiments}\label{sec:numerical}
	The accuracy and robustness of the complete flux schemes are examined through two numerical tests.
	The first test (Section~\ref{subsec:2D_convergence}) verifies the convergence rates in two and three dimensions with manufactured solutions and constant convection, covering the second-order schemes on triangles, rectangles, hexahedra, and tetrahedra as well as the third-order schemes on rectangles.
	The second test (Section~\ref{subsec:pn_junction}) treats a decoupled drift-diffusion model for a GaAs p-n~junction in two dimensions, demonstrating the applicability of the complete-flux scheme to semiconductor device simulation.
	
	\subsection{Convergence order verification}
	\label{subsec:2D_convergence}
	We numerically verify the convergence rates of the six considered schemes using manufactured solutions on the unit square $\Omega=(0,1)^2$ (two dimensions) and on the unit cube $\Omega=(0,1)^3$ (three dimensions).
\begin{example}\label{example,1}
	Let $d\in\{2,3\}$ denote the spatial dimension. Consider the steady-state convection-diffusion equation \eqref{eq:model} with constant velocity field $\bm{\beta}\in\mathbb{R}^d$,
	namely $\bm{\beta}=(50,60)^{T}$ in two dimensions and $\bm{\beta}=(10,15,20)^{T}$ in three dimensions,
	and two representative diffusion coefficients: $\alpha=10^{-1}$ (mildly convection-dominated)
	and $\alpha=10^{-8}$ (strongly convection-dominated).
	The source term $f$ and the homogeneous Dirichlet boundary conditions are chosen so that the exact solution is
	\begin{equation}
		u(\bm{x}) = \Big(\prod_{i=1}^{d} x_i\Big)
		\prod_{i=1}^{d} \frac{1-e^{\beta_i(x_i-1)}}{1-e^{-\beta_i}},
	\end{equation}
	where $\bm{x}=(x_1,\ldots,x_d)\in(0,1)^d$ and $\beta_i$ denotes the $i$-th component of $\bm{\beta}$.
	This solution vanishes on the whole boundary and develops sharp exponential layers near $x_i=1$
	when the components of $\bm{\beta}$ are large enough, see Figure~\ref{fig:exact} and Figure~\ref{fig:exact,2}.
\end{example}
	The performance of six distinct complete flux (CF) schemes is evaluated:
	\begin{itemize}
		\item \textbf{CF2-Tri}:  The second-order CF scheme on triangular meshes (piecewise linear approximation), see Subsection~\ref{subsec:second_order_2d};
		\item \textbf{CF2-Rect}:  The second-order CF scheme on rectangular meshes (piecewise bi-linear approximation), see Subsection~\ref{subsec:second_order_2d};
		\item \textbf{CF2-Hex}: The second-order CF scheme on hexahedral meshes in three dimensions (piecewise tri-linear approximation), see Subsection~\ref{subsec:extension_3d};
		\item \textbf{CF2-Tet}: The second-order CF scheme on tetrahedral meshes in three dimensions (piecewise linear approximation), see Subsection~\ref{subsec:extension_3d};
		\item \textbf{CF3-RectC0}:  The third-order CF scheme on rectangular meshes using the bi-quadratic $C^{0}$ finite element space, with dual control volumes defined by $p^{*}=1/4$ (cf.\ Figure~\ref{fig,3,1}), see Subsection~\ref{subsec:higher_order_rect};
		\item \textbf{CF3-RectC1}: The third-order CF scheme on rectangular meshes using the bi-quadratic $C^{1}$ B-spline space, with dual partitions constructed from the Greville abscissae (cf.\ Figure~\ref{fig,3,2}), see Subsection~\ref{subsec:higher_order_rect}.
	\end{itemize}
	As pointed out in Remark~\ref{remark,1}, the parameters $\xi_{K_e}$ and $\xi_{L_e}$ associated 
	with each control volume edge can be chosen independently while keeping the flux 
	relation exact. In the present subsection, we consistently adopt the standard choice: 
	$\xi_{K_e}$ and $\xi_{L_e}$ are taken as the actual designated‌ distances from the edge $e$ 
	to the adjacent control volume centers. For a simple verification of the flexibility of $\xi_{K_e}$ and $\xi_{L_e}$, we also test an 
	alternative variant of the CF2-Rect scheme. In this variant, for each edge $e$, we replace 
	the geometric parameters by random perturbations 
	\[
	\tilde{\xi}_{K_e} = \xi_{K_e}\bigl(0.1 + 0.8 r_e\bigr),\qquad
	\tilde{\xi}_{L_e} = \xi_{L_e}\bigl(0.1 + 0.8 r_e\bigr),
	\]
	where $r_e$ is a random number drawn uniformly from $(0,1)$, independently for each edge $e$.
	\begin{figure}[htbp]
		\centering
		\subfigure[Exact solution for $d=2$] {
			\begin{minipage}[b]{0.45\textwidth}
				\centering
				\includegraphics[width=\linewidth]{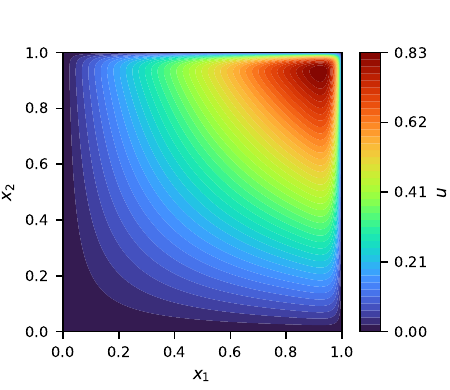}
				\label{fig:exact}
			\end{minipage}
		}
		\subfigure[Exact solution for $d=3$] {
			\begin{minipage}[b]{0.45\textwidth}
				\centering
				\includegraphics[width=\linewidth]{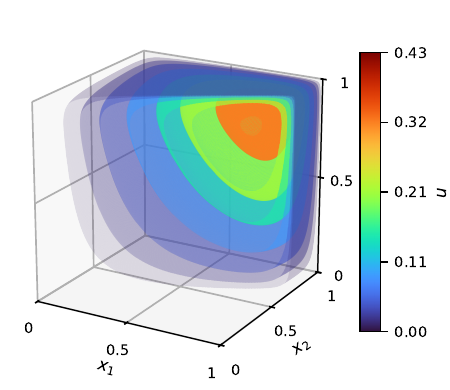}
				\label{fig:exact,2}
			\end{minipage}
		}\\
		\subfigure[Initial triangular mesh] {
			\begin{minipage}[b]{0.45\textwidth}
				\centering
				\includegraphics[width=130pt]{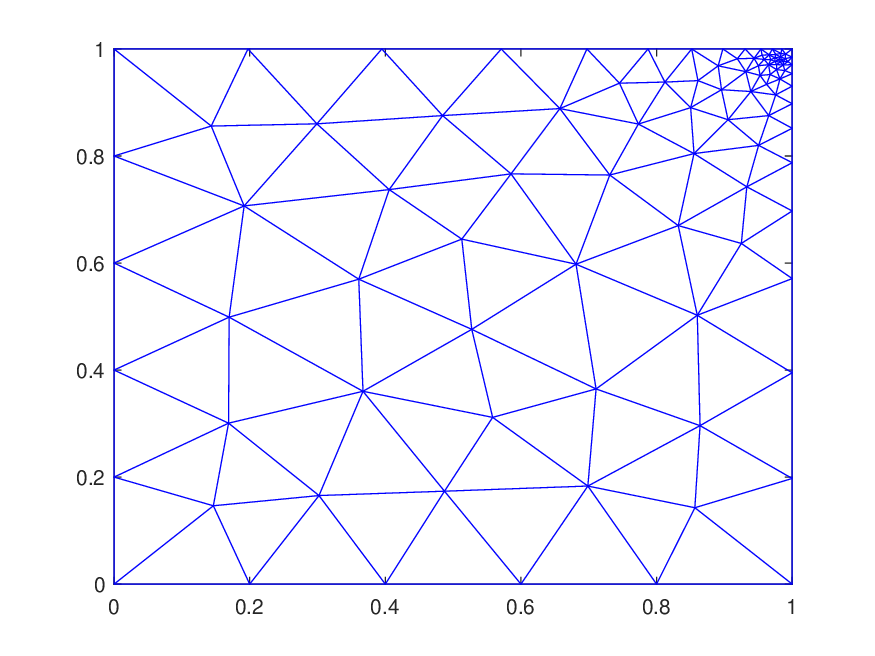}
				\label{fig:mesh}
			\end{minipage}
		}
		\caption{Exact solutions and a locally refined triangular mesh for Example~\ref{example,1}.}
		\label{fig:solution_and_mesh}
	\end{figure}
	All rectangular and hexahedral meshes are uniform Cartesian grids: in two dimensions the unit square $\Omega=(0,1)^2$ is subdivided into $N^2$ congruent squares, and in three dimensions the unit cube $\Omega=(0,1)^3$ is subdivided into $N^3$ congruent cubes, for the same number of intervals $N$ in each direction.  For the triangular meshes, we start from an initial grid with 170 triangles that is locally refined near the point
	$(0.98,0.98)$, i.e., in the vicinity of the corner  $(1,1)$ where the sharp boundary layers are most pronounced (see Figure~\ref{fig:mesh}); the subsequent triangular grids are obtained by uniform refinement of this initial mesh.  Unlike the locally refined triangular meshes, the tetrahedral meshes are uniform: each cube of the same Cartesian grid is subdivided into six tetrahedra, and the meshes are refined by uniform bisection of the one-dimensional node sequences; this is sufficient since the boundary layers of the three-dimensional example are milder than in two dimensions.
	
		\begin{table}[h!]
		\centering
		\caption{Discrete maximum error and convergence order for Example~\ref{example,1} in two dimensions with $\bm{\beta}=(50,60)^{T}$ and $\alpha=10^{-1},10^{-8}$.}
		\label{tab:conv_linfty_combined}
		\begin{tabular}{c@{\quad}c@{\quad}*{2}{c}@{\quad}*{2}{c}}
			\hline
			\multirow{2}{*}{Scheme}  & \multirow{2}{*}{Dof}& \multicolumn{2}{c}{$\alpha=10^{-1}$} & \multicolumn{2}{c}{$\alpha=10^{-8}$} \\
			\cline{3-4} \cline{5-6}
			&      & $\|u-u_h\|_{\ell^{\infty}}$ & Rate& $\|u-u_h\|_{\ell^{\infty}}$ & Rate \\
			\hline
			\multirow{4}{*}{CF2-Rect}  
			& 9409  &4.4780e-03  & -- & 8.3455e-04 & -- \\
			& 37249  &5.7713e-04  & 2.9781& 2.5104e-03 &-1.6007 \\
			& 148225 & 7.0555e-05 & 3.0435 & 1.1186e-03 &1.1706 \\
			& 591361 &1.3737e-05& 2.3651& 3.5915e-04&1.6421 \\
			& 2362369  & 3.1825e-06& 2.1118 & 1.0108e-04& 1.8308 \\
			\hline
			\multirow{4}{*}{\makecell[c]{CF2-Rect\\ (random $(\xi_{K_{e}},\xi_{L_{e}})$)}}  
			& 9409  &4.9993e-02  & -- & 7.2598e-02 & -- \\
			& 37249  &1.1260e-02  & 2.1667& 1.7711e-02 &2.0506 \\
			& 148225 &2.3916e-03 & 2.2436 & 5.7124e-03 &1.6386 \\
			& 591361 &4.9624e-04& 2.2731& 1.7253e-03&1.7305 \\
			& 2362369  & 1.0605e-04& 2.2223& 4.9098e-04& 1.8148\\
			\hline
			\multirow{4}{*}{CF2-Tri}    
			& 5578  & 1.3964e-02 & -- & 2.2567e-02 & -- \\
			& 22034 & 4.9049e-03 & 1.5232 & 8.1318e-03& 1.4860 \\
			& 87586 & 1.4053e-03 & 1.8116 & 2.7328e-03 &1.5803 \\
			& 349250&3.6543e-04  & 1.9476&8.6833e-04 & 1.6578 \\
			& 1394818 &9.2750e-05 & 1.9804 & 2.5712e-04  & 1.7578 \\
			\hline
		\end{tabular}
	\end{table}
	\begin{table}[h!]
		\centering
		\caption{Discrete maximum error and convergence order for Example~\ref{example,1} in three dimensions with $\bm{\beta}=(10,15,20)^{T}$ and $\alpha=10^{-1},10^{-8}$.}
		\label{tab:conv_3d}
		\begin{tabular}{c@{\quad}c@{\quad}*{2}{c}@{\quad}*{2}{c}}
			\hline
			\multirow{2}{*}{Scheme}  & \multirow{2}{*}{Dof}& \multicolumn{2}{c}{$\alpha=10^{-1}$} & \multicolumn{2}{c}{$\alpha=10^{-8}$} \\
			\cline{3-4} \cline{5-6}
			&      & $\|u-u_h\|_{\ell^{\infty}}$ & Rate& $\|u-u_h\|_{\ell^{\infty}}$ & Rate \\
			\hline
			\multirow{5}{*}{CF2-Hex}  
			& 35937 & 2.7938e-03 & -- & 6.5910e-03 & -- \\
			& 274625 & 3.7102e-04 & 2.9127 & 2.9990e-03 & 1.1360 \\
			& 912673 & 1.0509e-04 & 3.1111 & 1.4757e-03 & 1.7490 \\
			& 2146689 & 4.9837e-05 & 2.5933 & 8.5483e-04 & 1.8979 \\
			& 4173281 & 2.9400e-05 & 2.3651 & 5.5231e-04 & 1.9575 \\
			\hline
			\multirow{5}{*}{CF2-Tet}
			& 35937 & 7.4755e-03 & -- & 8.2370e-03 & -- \\
			& 274625 & 3.0286e-03 & 1.3035 & 2.1796e-03 & 1.9181 \\
			& 912673 & 1.6000e-03 & 1.5737 & 8.1476e-04 & 2.4268 \\
			& 2146689 & 9.7290e-04 & 1.7293 & 3.9559e-04 & 2.5115 \\
			& 4173281 & 6.3315e-04 & 1.9251 & 2.6063e-04 & 1.8700 \\
			\hline
		\end{tabular}
	\end{table}
	For each scheme, the error is measured in the discrete maximum norm taken over all control volume centers (computational nodes):
	\[
	\|u-u_h\|_{\ell^{\infty}} \;=\; \max_{P\in\mathcal{N}_h} \bigl|u(P)-u_h(P)\bigr|,
	\]
	where $\mathcal{N}_h$ denotes the set of nodes associated with the respective discrete space:
	the vertices for the piecewise linear, bi-linear, or tri-linear spaces, the nine computing nodes of
	each bi-quadratic $C^0$ element, and the Greville abscissae for the bi-quadratic $C^1$ B-spline space. 
	The observed convergence orders, as confirmed in Tables~\ref{tab:conv_linfty_combined} and~\ref{tab:conv_3d} and visualized in Figure~\ref{fig:convergence,alpha}, demonstrate the expected second-order accuracy for the CF2-Tri, CF2-Rect, CF2-Hex, and CF2-Tet schemes and the fourth-order accuracy for the CF3-RectC0 and CF3-RectC1 schemes, both of which exhibit superconvergence, at the computing nodes (CF3-RectC0) and at the Greville abscissae (CF3-RectC1).  Moreover,  Figure~\ref{fig:convergence,alpha} shows that the $C^1$ B-spline solution is more accurate per degree of freedom than its $C^0$ Lagrange counterpart, a well-known advantage often emphasized in the spline community~\cite{HughesCottrellBazilevs2005}.  For $\alpha=10^{-1}$, however, this advantage fades under mesh refinement, as the $C^0$ Lagrange solution exhibits an even more remarkable ``ultra-convergence'' behavior, attaining a convergence order close to five.
		\begin{figure}[h!]
		\centering
		\subfigure[$\alpha =10^{-1}$] {
			\begin{minipage}[b]{0.45\textwidth}
				\centering
				\includegraphics[width=160pt]{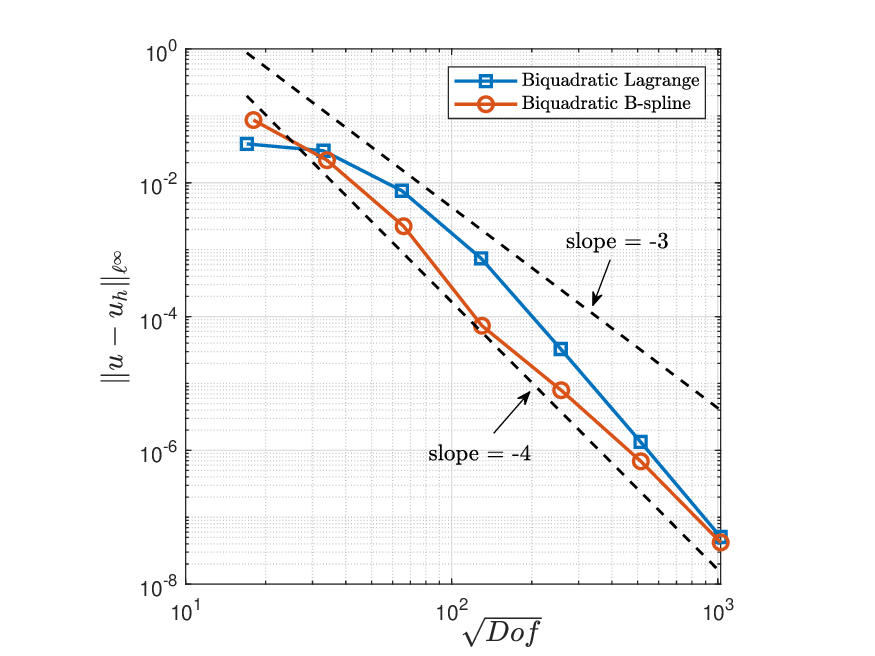}
				\label{fig:converg1}
			\end{minipage}
		}
		\hfill
		\subfigure[$\alpha =10^{-8}$] {
			\begin{minipage}[b]{0.45\textwidth}
				\centering
				\includegraphics[width=160pt]{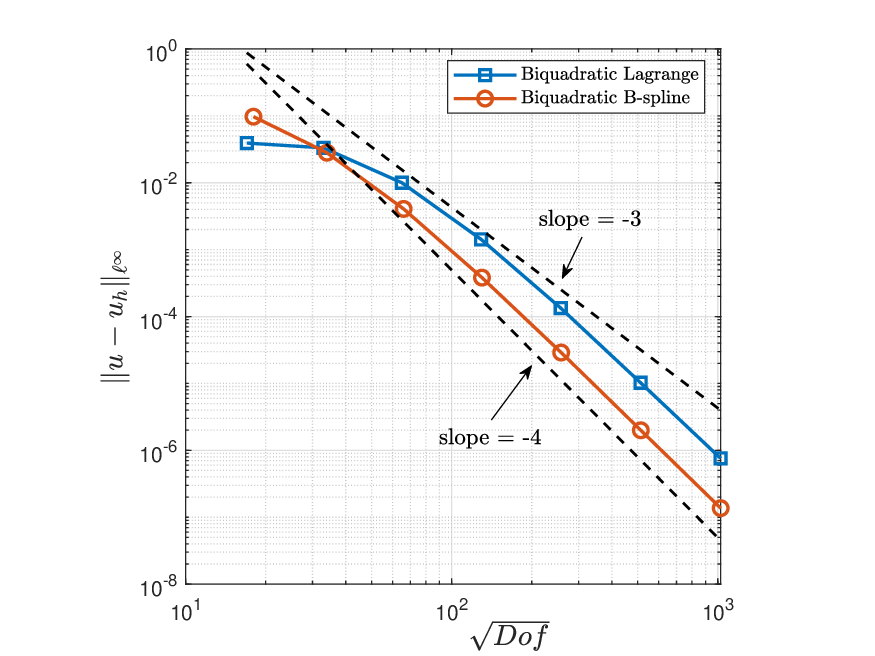}
				\label{fig:converg2}
			\end{minipage}
		}
		\caption{Discrete maximum error $\|u-u_h\|_{\ell^{\infty}}$ versus degrees of freedom (Dof) for
			Example~\ref{example,1} with $\bm{\beta}=(50,60)^{T}$ and $\alpha=10^{-1},10^{-8}$. The discrete solution spaces are
			taken as bi-quadratic Lagrange and B-spline basis spaces.}
		\label{fig:convergence,alpha}
	\end{figure}
	
	At the end of this subsection, we highlight a crucial observation concerning higher-order finite volume methods on rectangular meshes.
	Classical finite volume schemes that are obtained by inserting the primitive flux definition \eqref{flux,j} directly into the balance relation \eqref{eq: integral on control volume} are known to impose stringent constraints on the dual grid for quadratic trial spaces.
	For the bi-quadratic $C^0$ finite volume method, the optimal $L^2$
	convergence order can be achieved only when the control volumes are constructed
	using the Gauss points of the elements~\cite{LinY2015,ZhangZ2015}; in the parametrization of Subsection~\ref{subsec:higher_order_rect} this
	corresponds to a very specific choice of $p^* = 1/2 - \sqrt{3}/6$, and any other value degrades the $L^2$ accuracy. Similarly, for the bi-quadratic $C^1$ B-spline space, the standard
	finite volume scheme that places the control volume centers at the Greville abscissae
	is observed to deliver suboptimal second-order $L^2$ convergence~\cite{KamberG2022,KamberG2020}, and the underlying
	mechanism is still not fully understood.
	
	In contrast, the complete flux schemes developed in the present work rely on a
	fundamentally different flux reconstruction (Sections~\ref{sec,3} and~\ref{sec:4}). This reconstruction
	renders the resulting discrete scheme essentially insensitive to the geometric
	parameters that define the dual mesh. Concretely, the CF3-RectC0 scheme maintains its
	optimal convergence order for any admissible value of the parameter $p^*$; the choice
	$p^* = 1/4$ reported in Table~\ref{tab:conv_linfty_combined} is just one convenient instance, and other values also
	yield the optimal convergence order provided that $\xi_{K_e} + \xi_{L_e} = 0$ for interior edge, i.e., the local region around the edge~$e$, whose extent in the normal direction is determined by the parameters $\xi_{K_e}$ and $\xi_{L_e}$, is chosen symmetrically with respect to the edge.   Likewise, the CF3-RectC1 scheme retains optimal  accuracy when the control volumes are
	centered at the Greville points. Hence, the complete flux approach eliminates the traditional dependence on
	the precise positioning of control volume boundaries; a rigorous theoretical explanation of this robustness is left for future work.	
		
	\subsection{Decoupled drift-diffusion model for a p-n junction}
		\label{subsec:pn_junction}
		
		The complete flux framework of Section~\ref{sec:4} can be applied to the van~Roosbroeck drift-diffusion system that governs semiconductor devices~\cite{Selberherr1984}. 
		In this test, we consider a GaAs p-n junction at thermal equilibrium ($T=300\,\mathrm{K}$)
		on the two-dimensional rectangular domain $\Omega=[0,R]\times[0,L]$ with radial extent
		$R=0.5\,\mu\mathrm{m}$ and axial length $L=3\,\mu\mathrm{m}$, where $r\in[0,R]$ and $z\in[0,L]$
		are the radial and axial coordinates; the junction is at $z=L/2$.
		Doping is abrupt and depends only on the axial coordinate:
		\begin{equation}\label{eq:doping_pn}
			C(z) = \begin{cases}
				-N_A, & z < L/2 \quad\text{(p-region)},\\[2pt]
				+N_D, & z \ge L/2 \quad\text{(n-region)},
			\end{cases}
		\end{equation}
		together with acceptor density $N_A$ and donor density $N_D$.
		Following the Debye scaling with reference concentration $C_0=N_A$ and length scale $\ell=1\,\mu\mathrm{m}$, the dimensionless Poisson--Boltzmann equation reads
		\begin{equation}\label{eq:poisson_pb}
			-\lambda^2\,\Delta\bar\psi = \bar p - \bar n + \bar C,\qquad
			\lambda^2 = \frac{\varepsilon_s U_T}{q\,C_0\,\ell^2},
		\end{equation}
		where $\bar\psi=\psi/U_T$ is the scaled electrostatic potential, $\bar C = C/C_0$,
		$\bar p = \bar N_v\,e^{-\bar\psi}$, $\bar n = \bar N_c\,e^{\bar\psi-\bar E_c}$,
		$\bar N_v=N_v/C_0$, $\bar N_c=N_c/C_0$, $\bar E_c=E_c/U_T$, and
		$U_T=k_BT/q$ is the thermal voltage.
		Dirichlet conditions $\bar\psi=\psi_p=\ln(N_v/N_A)$ and $\bar\psi=\psi_n=\ln(N_c/N_D)+E_c/U_T$
		are imposed at the p-contact ($z=0$) and the n-contact ($z=L$), respectively, and homogeneous
		Neumann (zero-flux) conditions are imposed on the lateral boundaries $r=0$ and $r=R$.
		
		Once $\bar\psi$ is known, the (dimensionless) carrier concentrations satisfy the linear convection--diffusion equations
		\begin{align}
			\label{eq:continuity_holes} %
			\nabla\!\cdot\!(\nabla\bar p + \bar p\,\nabla\bar\psi) &= 0, \qquad
			\bar p|_{z=0}=1,\;\; \bar p|_{z=L}=0, \\[4pt]
			\label{eq:continuity_elec} %
			\nabla\!\cdot\!(\nabla\bar n - \bar n\,\nabla\bar\psi) &= 0, \qquad
			\bar n|_{z=0}=0,\;\; \bar n|_{z=L}=N_D/N_A,
		\end{align}
		with homogeneous natural (zero-flux) conditions for both $\bar n$ and $\bar p$ on the lateral
		boundaries $r=0$ and $r=R$. These are exactly of the canonical form $-\nabla\!\cdot\!(\alpha\nabla u - \boldsymbol\beta\,u)=f$ with $\alpha=1$, $f=0$,
		$\boldsymbol\beta=-\nabla\bar\psi$ for holes and $\boldsymbol\beta=+\nabla\bar\psi$ for electrons.
		Physical concentrations are recovered as $p=\bar p\,C_0$, $n=\bar n\,C_0$, and the electrostatic potential as $\phi=\bar\psi\,U_T$.
		
		\newcommand{\sym}[1]{\makebox[3.8cm][l]{$#1$}}
		\begin{table}[h!]
			\centering
			\caption{Physical and material parameters for the GaAs p-n junction ($T=300\,\mathrm{K}$).}
			\label{tab:gaas_constants}
			\begin{tabular}{ll}
				\toprule
				Parameter & \hspace{-1cm}Value \\
				\midrule
				\sym{\varepsilon_s}     & \hspace{-1cm}$1.1422\times10^{-10}\,\mathrm{F/m}$  \\
				\sym{N_c}               & \hspace{-1cm}$4.3520\times10^{23}\,\mathrm{m^{-3}}$  \\
				\sym{N_v}               & \hspace{-1cm}$9.1396\times10^{24}\,\mathrm{m^{-3}}$  \\
				\sym{E_c}               & \hspace{-1cm}$1.424\,\mathrm{eV}$                    \\
				\sym{\mu_p}             & \hspace{-1cm}$0.04\,\mathrm{m^2/(V\!\cdot\!s)}$      \\
				\sym{N_A}               & \hspace{-1cm}$4.2042\times10^{24}\,\mathrm{m^{-3}}$ \\
				\sym{N_D}               & \hspace{-1cm}$4.3520\times10^{23}\,\mathrm{m^{-3}}$  \\
				\sym{q}                 & \hspace{-1cm}$1.602176634\times10^{-19}\,\mathrm{C}$  \\
				\sym{k_B}               & \hspace{-1cm}$1.380649\times10^{-23}\,\mathrm{J/K}$  \\
				\sym{T}                 & \hspace{-1cm}$300\,\mathrm{K}$                       \\
				\sym{U_T=k_BT/q}        & \hspace{-1cm}$2.5852\times10^{-2}\,\mathrm{V}$       \\
				\sym{\lambda^2}         & \hspace{-1cm}$4.3837\times10^{-6}$                 \\
				\bottomrule
			\end{tabular}
		\end{table}
		
		The physical parameters, listed in Table~\ref{tab:gaas_constants}, follow the GaAs
		benchmark of the numerical test in \cite{LeiW2024}; the doping densities and device length
		match their Section~6.3. With $U_T\approx 25.85\,\mathrm{mV}$, the contact potentials
		are $\psi_p=\ln(N_v/N_A)\approx0.777$ and $\psi_n=\ln(N_c/N_D)+E_c/U_T\approx55.083$,
		so $\bar\psi$ spans roughly $54.3$ Debye units ($\phi$ spans $1.404\,\mathrm{V}$).
		The Debye length is $\lambda_D=\sqrt{\varepsilon_sU_T/(q\,C_0)}\approx2.09\,\mathrm{nm}$,
		giving a Poisson scale $\lambda^2\approx4.38\times10^{-6}$.
		
		\paragraph{Discretization and mesh grading}
	Nonlinear Poisson--Boltzmann~\eqref{eq:poisson_pb} is solved by Newton iteration with
	$\lambda^2$-continuation ($\lambda^2=1\to\lambda^2_{\text{phys}}$); the exact Jacobian
	includes the diagonal Schur complement of the carrier densities $d\bar f/d\bar\psi=
	-(\bar p+\bar n)$.
	The $\lambda^2=1$ initial solve uses the doping-only linear problem
	$-\Delta\bar\psi = -\bar C$, which is a reliable starting guess for the continuation.
	We use the vertex-centered (box-dual) FVM Poisson solver of Section~\ref{subsec:second_order_2d}
	using Conjugate Gradient iteration together with the incomplete LU factorization as the preconditioner.
		
		Once the electrostatic field $\boldsymbol\beta=\mp\nabla\bar\psi$ is evaluated, the carrier
		continuity equations~\eqref{eq:continuity_holes}--\eqref{eq:continuity_elec}
		are solved on a bi-quadratic (Q2) space by the third-order complete-flux scheme
		CF3-RectC0 of Section~\ref{subsec:higher_order_rect} (Subsubsection~\ref{subsubsec:c0_quad}),
		i.e.\ by the full discrete operator $K_{V,1}+K_{V,2}=F_{V,1}+F_{V,2}$ of~\eqref{eq:CF3_scheme}.
		The Poisson problem is kept on the linear (Q1) space, so $\bar\psi$ is piecewise linear and
		the drift $\boldsymbol\beta$ is piecewise constant per cell.
		Here we note that
		the tangential correction $K_{V,2}$ is genuinely nonzero: for a bi-quadratic $u_h$ the
		tangential derivative $\partial u_h/\partial\eta$ varies along the dual face, so the diffusive
		part $\alpha\,\partial u_h/\partial\eta$ of the tangential flux $j_{\bm t}$ no longer cancels
		in the $\eta$-difference, in contrast to the linear case, where it cancels identically and
		$K_{V,2}$ reduces to the convective part alone (cf.~\eqref{eq,Kv2uh}).  Consequently the
		complete $K_{V,2}$, with both the diffusive and the convective tangential contributions, is
		engaged.  The source correction $F_{V,2}$ vanishes identically, since there is no
		recombination ($f\equiv0$ in~\eqref{eq:continuity_holes}--\eqref{eq:continuity_elec}).
		For reference, the second-order scheme of Section~\ref{subsec:second_order_2d} (linear
		carriers) yields essentially the same profiles and is used below as a consistency check.
		No Gummel iteration is performed: the Poisson and continuity solves are decoupled
		and run once each.
		
	The axial mesh is graded to resolve the depletion region near $z=L/2$.
	Cell widths follow a geometric progression: starting from $h_{\min}$ at the junction,
	they grow outward by a factor $\delta$ at each step until reaching a cap $h_{\max}$ in the bulk,
	then remain at $h_{\max}$ to the contacts.
	The 2D solver (quadrilateral mesh, $5$ cells in the radial direction)
	uses $h_{\min}=0.25\,\mathrm{nm}$, $\delta=1.05$, $h_{\max}=25\,\mathrm{nm}$,
	with the axial cell widths built exactly along the $z$-axis.
	All mesh parameters are summarized in Table~\ref{tab:mesh_grading}. Finally, we note that we used 
	the deal.II finite element library to implement the grid partition as well as the FVM solver \cite{dealii}.
	
	\begin{table}[h!]
		\centering
		\caption{Mesh grading parameters for the 2D p-n junction simulation.}
		\label{tab:mesh_grading}
		\begin{tabular}{lc}
			\toprule
			Parameter & 2D solver \\
			\midrule
			$h_{\min}$ (junction)  & $0.25\,\mathrm{nm}$  \\
			Growth factor $\delta$ & $1.05$               \\
			$h_{\max}$ (bulk cap)  & $25\,\mathrm{nm}$    \\
			Radial cells           & $5$                  \\
			\bottomrule
		\end{tabular}
	\end{table}
		
	\paragraph{Simulation results: linear versus bi-quadratic carriers}
		We compare two carrier discretizations on the same graded quadrilateral mesh
		(with the Poisson problem always on the linear Q1 space): the \emph{linear} (Q1)
		scheme CF2-Rect, whose normal edge flux is the Scharfetter--Gummel Bernoulli flux
		(cf.~Section~\ref{subsec:second_order_2d}), and the \emph{bi-quadratic} (Q2)
		CF3-RectC0 scheme, which engages the complete tangential correction $K_{V,2}$.
		Figure~\ref{fig:pn2d_q2_contours} shows the bi-quadratic contour fields of $\phi$,
		$\log_{10}p$, and $\log_{10}n$; the contours are vertical (independent of $r$),
		confirming the 1D-in-2D character of the solution.
		On the fine mesh of Table~\ref{tab:mesh_grading} the two discretizations agree
		closely and both reproduce Boltzmann equilibrium in the quasi-neutral regions.
		The higher-order space pays off once the depletion region is under-resolved.
		Figure~\ref{fig:pn2d_q1_vs_q2} overlays the axial profiles of $p$ and
		$n$ for the two schemes on the fine mesh and on a coarser mesh
		($h_{\min}=0.5\,\mathrm{nm}$, $\delta=1.15$).  On the fine mesh the curves coincide.
		On the coarser mesh the large cell-P\'eclet number in the depletion region drives
		the (non-monotone) linear solution to unphysical \emph{negative} concentrations
		($44$ hole and $1312$ electron nodes, out of $3280$), seen as the dashed curves
		dropping below zero, whereas the bi-quadratic solution remains entirely
		non-negative and stays close to Boltzmann equilibrium.  This illustrates the
		benefit of the higher-order space together with the complete tangential
		correction $K_{V,2}$. Thus, these results confirm that the complete-flux scheme can be applied to a
		semiconductor device problem in a fully decoupled fashion.
		
		\begin{figure}[htbp]
			\centering
			\includegraphics[width=0.95\linewidth]{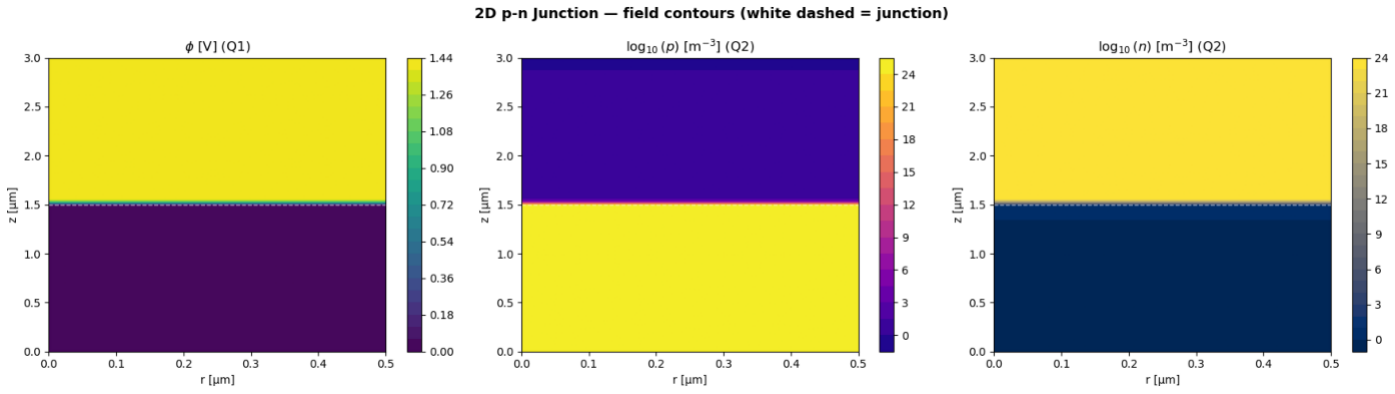}
			\caption{Two-dimensional p-n junction with bi-quadratic (Q2, CF3-RectC0) carriers:
				contour fields of $\phi$, $\log_{10}p$, and $\log_{10}n$. The vertical banding
				confirms $r$-independence. Junction at $z=1.5\,\mu\mathrm{m}$ (white dashed).}
			\label{fig:pn2d_q2_contours}
		\end{figure}
		
		\begin{figure}[htbp]
			\centering
			\includegraphics[width=0.95\linewidth]{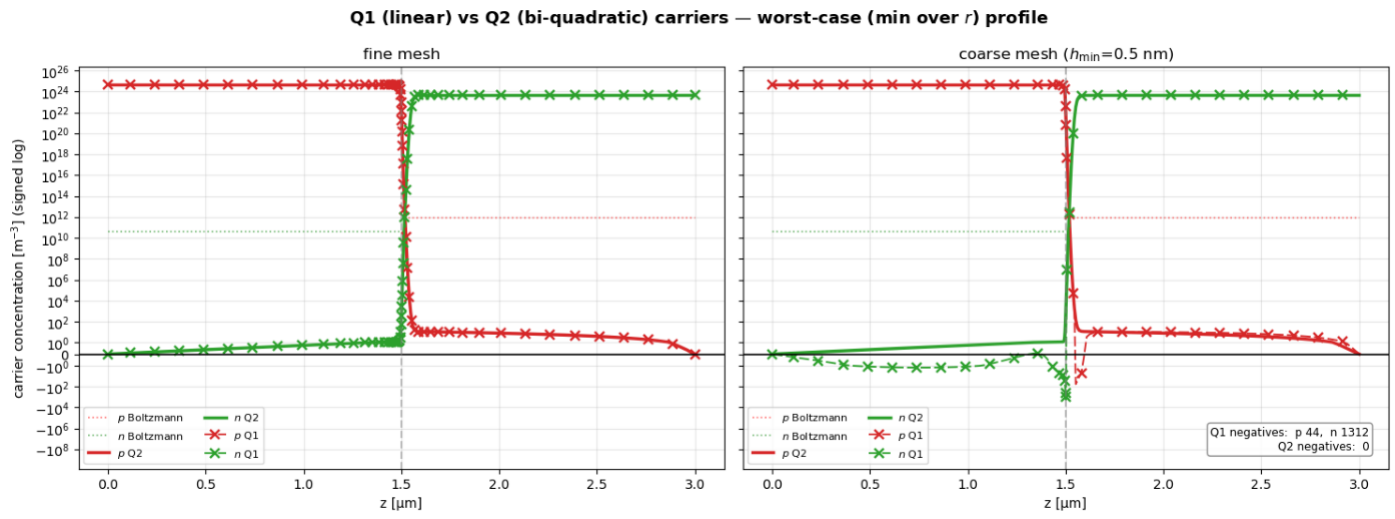}
			\caption{Two-dimensional p-n junction, linear (Q1, dashed with cross markers) versus
				bi-quadratic (Q2, solid) carriers: worst-case (minimum over $r$) axial profiles of
				$p$ and $n$ on a signed-log scale, on the fine mesh (left) and on a coarser mesh
				($h_{\min}=0.5\,\mathrm{nm}$, right), overlaid with Boltzmann equilibrium (dotted).
				On the coarse mesh the linear solution drops below zero in the depletion region,
				while the bi-quadratic solution stays non-negative.}
			\label{fig:pn2d_q1_vs_q2}
		\end{figure}
		

		\section{Conclusion}\label{sec:conclusion}
		
			We have developed a family of complete flux finite volume schemes for convection-diffusion equations on arbitrary grids. By deriving the exact normal flux from the underlying PDE via a Green's function, the framework yields discrete schemes that are automatically high-order accurate once the approximation space is chosen. The P\'{e}clet number entering the discrete flux is determined by the local extent of the control-volume boundary neighborhood, distinct from the usual mesh P\'{e}clet number and not fixed by the element size; this yields additional freedom in the construction. Concrete schemes include second-order discretizations on triangles and rectangles, third-order schemes on rectangles using $C^0$ Lagrange and $C^1$ B-spline spaces, and the extension to three-dimensional hexahedral and tetrahedral meshes. Numerical experiments in two and three dimensions confirm that the second-order schemes, whose control volumes are uniquely determined, attain the optimal convergence order, and that both third-order schemes attain optimal order and superconvergence, whether on a dual mesh not restricted to the Gauss--Legendre pattern (the $C^0$ Lagrange variant) or on the Greville dual mesh (the $C^1$ B-spline variant). In the two-dimensional semiconductor device simulations, the second- and third-order schemes are positivity-preserving. Immediate future work includes the study of stability and optimal error estimates for these schemes on unrestricted dual meshes and for arbitrary P\'{e}clet numbers, as well as the analysis of their positivity preservation.

		\section*{Acknowledgments}
WL is partially supported by the National Natural Science Foundation of China
(Grant No. 12301496 and 12431015) and the Fundamental Research Fund for the Central Universities, University of Electronic Science and Technology of China (Grant No. Y030242063002065). 
LX is partially supported by the National Natural Science Foundation of China
(Grant No. 62231016 and 12431015) and the Foundation for Innovative Research Groups of the Natural Science Foundation of Sichuan Province (No. 2025NSFTD0004). PY is partially supported by the National Natural Science Foundation of China (Grant No. 12501537).
	
	\bibliographystyle{siamplain}
	\bibliography{CFFVrefer}

\begin{thebibliography}{10}

\bibitem{Aavatsmark2002}
{\sc I.~Aavatsmark}, {\em An introduction to multipoint flux approximations for
  quadrilateral grids}, Comput. Geosci., 6 (2002), pp.~405--432.

\bibitem{dealii}
{\sc D.~Arndt, W.~Bangerth, M.~Bergbauer, B.~Blais, M.~Fehling,
  R.~Gassm\"{o}ller, T.~Heister, L.~Heltai, M.~Kronbichler, M.~Maier, P.~Munch,
  S.~Scheuerman, B.~Turcksin, S.~Uzunbajakau, D.~Wells, and M.~Wichrowski},
  {\em The deal.ii library, version 9.7}, Journal of Numerical Mathematics, 33
  (2025), pp.~403--415.

\bibitem{BankRose1987}
{\sc R.~E. Bank and D.~J. Rose}, {\em Some error estimates for the box method},
  SIAM J. Numer. Anal., 24 (1987), pp.~777--787.

\bibitem{BrooksHughes1982}
{\sc A.~N. Brooks and T.~J.~R. Hughes}, {\em Streamline upwind/petrov-galerkin
  formulations for convection dominated flows with particular emphasis on the
  incompressible navier-stokes equations}, Comput. Methods Appl. Mech. Engrg.,
  32 (1982), pp.~199--259.

\bibitem{Cai1991}
{\sc Z.~Cai}, {\em On the finite volume element method}, Numer. Math., 58
  (1991), pp.~713--735.

\bibitem{ChenZ2012}
{\sc Z.~Chen, J.~Wu, and Y.~Xu}, {\em Higher-order finite volume methods for
  elliptic boundary value problems}, Adv. Comput. Math., 37 (2012),
  pp.~191--253.

\bibitem{Cheng2021}
{\sc H.~M. Cheng and J.~ten Thije~Boonkkamp}, {\em A generalised complete flux
  scheme for anisotropic advection-diffusion equations}, Adv. Comput. Math., 47
  (2021), pp.~Paper No. 19, 26.

\bibitem{CoddingtonLevinson1955}
{\sc E.~A. Coddington and N.~Levinson}, {\em Theory of Ordinary Differential
  Equations}, McGraw-Hill, New York, 1955.

\bibitem{Eymard2000}
{\sc R.~Eymard, T.~Gallou\"et, and R.~Herbin}, {\em Finite volume methods}, in
  Handbook of Numerical Analysis, North-Holland, Amsterdam, 2000,
  pp.~713--1020.

\bibitem{Farrell2017}
{\sc P.~Farrell and A.~Linke}, {\em Uniform second order convergence of a
  complete flux scheme on unstructured 1{D} grids for a singularly perturbed
  advection-diffusion equation and some multidimensional extensions}, J. Sci.
  Comput., 72 (2017), pp.~373--395.

\bibitem{Forsyth1991}
{\sc P.~A. Forsyth}, {\em A control volume finite element approach to {NAPL}
  groundwater contamination}, SIAM J. Sci. Statist. Comput., 12 (1991),
  pp.~1029--1057.

\bibitem{Hermeline2000}
{\sc F.~Hermeline}, {\em A finite volume method for the approximation of
  diffusion operators on distorted meshes}, J. Comput. Phys., 160 (2000),
  pp.~481--499.

\bibitem{HughesCottrellBazilevs2005}
{\sc T.~J.~R. Hughes, J.~A. Cottrell, and Y.~Bazilevs}, {\em Isogeometric
  analysis: {CAD}, finite elements, {NURBS}, exact geometry and mesh
  refinement}, Comput. Methods Appl. Mech. Engrg., 194 (2005), pp.~4135--4195.

\bibitem{KamberG2022}
{\sc G.~Kamber, H.~Gotovac, V.~Kozuli\'c, and B.~Gotovac}, {\em 2-{D} local hp
  adaptive isogeometric analysis based on hierarchical {F}up basis functions},
  Comput. Methods Appl. Mech. Engrg., 398 (2022), pp.~Paper No. 115272, 32.

\bibitem{KamberG2020}
{\sc G.~Kamber, H.~Gotovac, V.~Kozuli\'c, L.~Malenica, and B.~z. Gotovac}, {\em
  Adaptive numerical modeling using the hierarchical {F}up basis functions and
  control volume isogeometric analysis}, Internat. J. Numer. Methods Fluids, 92
  (2020), pp.~1437--1461.

\bibitem{LeiW2024}
{\sc W.~Lei, S.~Piani, P.~Farrell, N.~Rotundo, and L.~Heltai}, {\em A weighted
  hybridizable discontinuous {G}alerkin method for drift-diffusion problems},
  J. Sci. Comput., 99 (2024), pp.~Paper No. 33, 26.

\bibitem{LeVeque2002}
{\sc R.~J. LeVeque}, {\em Finite Volume Methods for Hyperbolic Problems},
  Cambridge University Press, 2002.

\bibitem{LiR2000}
{\sc R.~Li, Z.~Chen, and W.~Wu}, {\em Generalized difference methods for
  differential equations}, vol.~226, Marcel Dekker, Inc., New York, 2000.

\bibitem{LiYangZhangPPR}
{\sc Y.~Li, P.~Yang, and Z.~Zhang}, {\em Polynomial preserving recovery for the
  finite volume element methods under simplex meshes}, Math. Comp., 94 (2025),
  pp.~611--645.

\bibitem{LinY2015}
{\sc Y.~Lin, M.~Yang, and Q.~Zou}, {\em {$L^2$} error estimates for a class of
  any order finite volume schemes over quadrilateral meshes}, SIAM J. Numer.
  Anal., 53 (2015), pp.~2009--2029.

\bibitem{Liu2013}
{\sc L.~Liu, J.~van Dijk, J.~H.~M. ten Thije~Boonkkamp, D.~B. Mihailova, and
  J.~J. A.~M. van~der Mullen}, {\em The complete flux scheme---error analysis
  and application to plasma simulation}, J. Comput. Appl. Math., 250 (2013),
  pp.~229--243.

\bibitem{lv2012optimal}
{\sc J.~Lv and Y.~Li}, {\em Optimal biquadratic finite volume element methods
  on quadrilateral meshes}, SIAM J. Numer. Anal., 50 (2012), pp.~2379--2399.

\bibitem{Patankar1980}
{\sc S.~Patankar}, {\em Numerical Heat Transfer and Fluid Flow}, Hemisphere
  Publishing Corporation, Washington, DC, 1980.

\bibitem{RoosStynesTobiska2008}
{\sc H.~G. Roos, M.~Stynes, and L.~Tobiska}, {\em Robust numerical methods for
  singularly perturbed differential equations: convection-diffusion-reaction
  and flow problems}, Springer, 2nd~ed., 2008.

\bibitem{Scharfetter1969}
{\sc D.~L. Scharfetter and H.~K. Gummel}, {\em Large-signal analysis of a
  silicon read diode oscillator}, IEEE Transactions on Electron Devices, 16
  (1969), pp.~64--77.

\bibitem{SeinfeldPandis2016}
{\sc J.~H. Seinfeld and S.~N. Pandis}, {\em Atmospheric Chemistry and Physics:
  From Air Pollution to Climate Change}, Wiley, 3rd~ed., 2016.

\bibitem{Selberherr1984}
{\sc S.~Selberherr}, {\em Analysis and Simulation of Semiconductor Devices},
  Springer-Verlag, 1984.

\bibitem{ten2005complete}
{\sc J.~H.~M. ten Thije~Boonkkamp}, {\em A complete flux scheme for
  one-dimensional combustion simulation}, Hermes Science Publishing, London,
  2005, pp.~573--583.

\bibitem{ten2011}
{\sc J.~H.~M. ten Thije~Boonkkamp and M.~J.~H. Anthonissen}, {\em The finite
  volume-complete flux scheme for advection-diffusion-reaction equations}, J.
  Sci. Comput., 46 (2011), pp.~47--70.

\bibitem{Thiart1990}
{\sc G.~D. Thiart}, {\em Improved finite-difference scheme for the solution of
  convection-diffusion problems with the simplen algorithm}, Numer. Heat
  Transf. Part B, 18 (1990), pp.~81--95.

\bibitem{Wang2016}
{\sc X.~Wang and Y.~Li}, {\em $l^2$ error estimates for high order finite
  volume methods on triangular meshes}, SIAM J. Numer. Anal., 54 (2016),
  pp.~2729--2749.

\bibitem{XuZou2009}
{\sc J.~Xu and Q.~Zou}, {\em Analysis of linear and quadratic simplicial finite
  volume methods for elliptic equations}, Numer. Math., 111 (2009),
  pp.~469--492.

\bibitem{ZhangZ2015}
{\sc Z.~Zhang and Q.~Zou}, {\em Vertex-centered finite volume schemes of any
  order over quadrilateral meshes for elliptic boundary value problems}, Numer.
  Math., 130 (2015), pp.~363--393.

\bibitem{ZhengBennett2002}
{\sc C.~Zheng and G.~D. Bennett}, {\em Applied Contaminant Transport Modeling},
  Wiley, 2nd~ed., 2002.

\end{thebibliography}
\end{document}